\newtheorem{theorem}{Theorem}
\newtheorem{proposition}[theorem]{Proposition}
\newtheorem{corollary}[theorem]{Corollary}
\theoremstyle{definition}
\newtheorem{example}[theorem]{Example}
\definecolor{webgreen}{rgb}{0,.5,0}
\definecolor{webbrown}{rgb}{.6,0,0}
\newcommand{\seqnum}[1]{\href{http://www.research.att.com/cgi-bin/access.cgi/as/~njas/sequences/eisA.cgi?Anum=#1}{\underline{#1}}}
\begin{document}

\begin{center}
\vskip 1cm{\LARGE\bf Chebyshev moments and Riordan involutions} \vskip 1cm \large
Paul Barry\\
School of Science\\
Waterford Institute of Technology\\
Ireland\\
\href{mailto:pbarry@wit.ie}{\tt pbarry@wit.ie}
\end{center}
\vskip .2 in

\begin{abstract} We show that the coefficient array of a family of Chebyshev moments defines an involution in the group of Riordan arrays. We then extend this result to certain families of $d$-orthogonal polynomials.\end{abstract}

\section{Introduction} In this note, we exhibit two families of parameterized generalized Chebyshev polynomials \cite{Chebyshev} whose moments have coefficient arrays that are involutions in the Riordan group of lower-triangular matrices. The structure of the paper is as follows.
\begin{itemize}
\item This introduction
\item Preliminaries on Riordan arrays
\item Riordan involutions
\item Riordan arrays and orthogonal polynomials
\item The main results
\item A general result
\item A factorization theorem
\item Further results
\item Conclusions
\item Appendix - a look at the case of the Chebyshev polynomials of the first kind
\end{itemize}
Readers who are familiar with any of the above introductory topics may safely skip the relevant section.

\section{Preliminaries on Riordan arrays}
We recall some facts about Riordan arrays in this introductory section.
A Riordan array \cite{Book, SGWW} is defined by a pair of power series
$$g(x)=g_0 + g_1 x + g_2 x^2 + \cdots=\sum_{n=0}^{\infty} g_n x^n,$$ and
$$f(x)=f_1 x + f_2 x^2+ f_3 x^3 + \cdots = \sum_{n=1}^{\infty} f_n x^n.$$
We require that $g_0 \ne 0$ (and hence $g(x)$ is invertible, with inverse $\frac{1}{g(x)}$), while we also require that
$f_0=0$ and $f_1 \ne 0$ (hence $f(x)$ has a compositional inverse $\bar{f}(x)=\text{Rev}(f)(x)$ defined by $f(\bar{f}(x))=x$).
The set of such pairs $(g(x), f(x))$ forms a group (called the Riordan group \cite{SGWW}) with multiplication
$$(g(x), f(x)) \cdot (u(x), v(x))=(g(x)u(f(x)), v(f(x)),$$ and with inverses given by
$$(g(x), f(x))^{-1}=\left(\frac{1}{g(\bar{f}(x))}, \bar{f}(x)\right).$$
The coefficients of the power series may be drawn from any ring (for example, the integers $\mathbb{Z}$) where these operations make sense. To each such ring there exists a corresponding Riordan group.

There is a matrix representation of this group, where, to the element $(g(x), f(x))$, we associate the matrix
$\left(a_{n,k}\right)_{0 \le n,k \le \infty}$ with general element
$$t_{n,k}=[x^n] g(x)f(x)^k.$$
Here, $[x^n]$ is the functional that extracts the coefficient of $x^n$ in a power series \cite{MC}. In this representation, the group law corresponds to ordinary matrix multiplication, and the inverse of $(g(x), f(x))$ is represented by the inverse of $\left(t_{n,k}\right)_{0 \le n,k \le \infty}$.

The Fundamental Theorem of Riordan arrays is the rule
$$(g(x), f(x))\cdot h(x)=g(x)h(f(x)),$$ detailing how an array $(g(x), f(x))$ can act on a power series. This corresponds to the matrix $(t_{n,k})$ multiplying the column vector $(h_0, h_1, h_2,\ldots)^T$.

\begin{example} Pascal's triangle, also known as the binomial matrix, is defined by the Riordan group element
$$B=\left(\frac{1}{1-x}, \frac{x}{1-x}\right).$$ This means that we have
$$\binom{n}{k}=[x^n] \frac{1}{1-x} \left(\frac{x}{1-x}\right)^k.$$
To see that this is so, we need to be familiar with the rules of operation of the functional $[x^n]$ \cite{MC}.
We have
\begin{align*}
[x^n] \frac{1}{1-x} \left(\frac{x}{1-x}\right)^k&=[x^n] \frac{x^k}{(1-x)^{k+1}}\\
&= [x^{n-k}] (1-x)^{-(k+1)}\\
&= [x^{n-k}] \sum_{j=0}^{\infty} \binom{-(k+1)}{j}(-1)^j x^j\\
&= [x^{n-k}] \sum_{j=0}^{\infty} \binom{k+1+j-1}{j}x^j\\
&= [x^{n-k}] \sum_{j=0}^{\infty} \binom{k+j}{j} x^j \\
&= \binom{k+n-k}{n-k}=\binom{n}{n-k}=\binom{n}{k}.\end{align*}
\end{example}

The binomial matrix is an element of the Bell subgroup of the Riordan group, consisting of arrays of the form $(g(x), xg(x))$. It is also an element of the hitting time subgroup, which consists of arrays of the form
$\left(\frac{x f'(x)}{f(x)}, f(x)\right)$. Arrays of the form $(1, f(x))$ belong to the associated or Lagrange subgroup of the Riordan group.

Note that all the arrays in this note are lower triangular matrices of infinite extent. We show appropriate truncations.

Many examples of sequences and  Riordan arrays are documented in the On-Line Encyclopedia of Integer Sequences (OEIS) \cite{SL1, SL2}. Sequences are frequently referred to by their
OEIS number. For instance, the binomial matrix $\mathbf{B}=\left(\frac{1}{1-x}, \frac{x}{1-x}\right)$ (``Pascal's triangle'') is \seqnum{A007318}. The diagonal sums of this matrix are the Fibonacci numbers $F_{n+1}$. The sequence $F_n$ is \seqnum{A000045}. In the sequel we will not distinguish between an array pair $(g(x), f(x))$ and its matrix representation.

The Hankel transform of a sequence $a_n$ is the sequence of determinants $h_n=|a_{i+j}|_{i \le i,j \le n}$ \cite{Kratt, Layman}.

The Catalan numbers $C_n=\frac{1}{n+1} \binom{2n}{n}$ \seqnum{A000108} have generating function
$$c(x)=\frac{1-\sqrt{1-4x}}{2x}.$$
We note that $$\text{Rev}(xc(x)) = x(1-x).$$ The Catalan numbers $C_n$ are the unique numbers such that the sequences $C_n$ and $C_{n+1}$ both have their Hankel transforms given by $h_n=1$ for all $n \ge 0$.

There are a number of combinatorially important Riordan arrays that are closely related to the Catalan numbers. The principal ones are $(1, xc(x))$ \seqnum{A106566}, $(1, c(x)-1)=(1,xc(x)^2)$ \seqnum{A128899}, $(c(x), xc(x))$ \seqnum{A033184} and $(c(x)^2, xc(x)^2)$ \seqnum{A039598}. These, and their reverse triangles, are collectively known as Catalan matrices. For instance, the matrix $(1, xc(x))$ begins
$$\left(
\begin{array}{ccccccc}
 1 & 0 & 0 & 0 & 0 & 0 & 0 \\
 0 & 1 & 0 & 0 & 0 & 0 & 0 \\
 0 & 1 & 1 & 0 & 0 & 0 & 0 \\
 0 & 2 & 2 & 1 & 0 & 0 & 0 \\
 0 & 5 & 5 & 3 & 1 & 0 & 0 \\
 0 & 14 & 14 & 9 & 4 & 1 & 0 \\
 0 & 42 & 42 & 28 & 14 & 5 & 1 \\
\end{array}
\right).$$
This is \seqnum{A106566}.

\section{Riordan involutions}
An involution in the group of Riordan arrays is a Riordan array $A$ such that $A^2=I$, that is, $A$ is an element of order $2$ \cite{Cameron, Cheon, Phulara}. For instance, the arrays $I=(1,x)$ and $(1,-x)$ are involutions. The result is trivial for $I$, and for $(1,-x)$ we have
$$(1, -x) \cdot (1, -x)= (1, -x(-x))=(1, -(-x))=(1, x).$$
A less trivial example is the Riordan array $\left(\frac{1}{1-x}, \frac{-x}{1-x}\right)$ which is a signed version of Pascal's triangle (the binomial matrix) which begins
$$\left(
\begin{array}{ccccccc}
 1 & 0 & 0 & 0 & 0 & 0 & 0 \\
 1 & -1 & 0 & 0 & 0 & 0 & 0 \\
 1 & -2 & 1 & 0 & 0 & 0 & 0 \\
 1 & -3 & 3 & -1 & 0 & 0 & 0 \\
 1 & -4 & 6 & -4 & 1 & 0 & 0 \\
 1 & -5 & 10 & -10 & 5 & -1 & 0 \\
 1 & -6 & 15 & -20 & 15 & -6 & 1 \\
\end{array}
\right).$$
We have
\begin{align*}\left(\frac{1}{1-x}, \frac{-x}{1-x}\right)\cdot \left(\frac{1}{1-x}, \frac{-x}{1-x}\right)
&=\left(\frac{1}{1-x} \frac{1}{1-\frac{-x}{1-x}}, \frac{-\frac{-x}{1-x}}{1-\frac{-x}{1-x}}\right)\\
&=\left(\frac{1}{1-x} (1-x), \frac{\frac{x}{1-x}}{\frac{1}{1-x}}\right)\\
&=(1,x).\end{align*}
Thus the Riordan array $\left(\frac{1}{1-x}, \frac{-x}{1-x}\right)$ is a Riordan involution. The next result follows from a general principal to be found in \cite{Phulara}. Here we give a direct proof.
\begin{proposition} The Riordan array $(c(x), -xc(x)^3)$ is a Riordan involution.
\end{proposition}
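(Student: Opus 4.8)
The plan is to square the array directly with the Riordan multiplication rule. Writing $g(x)=c(x)$ and $f(x)=-xc(x)^3$, we have $(g,f)\cdot(g,f)=\bigl(g(x)\,g(f(x)),\,f(f(x))\bigr)$, so it suffices to prove the two identities $f(f(x))=x$ and $g(x)\,g(f(x))=1$. I expect both to drop out of a single auxiliary claim: $c(f(x))=\frac{1}{c(x)}$.

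The only tool needed is the defining functional equation $c(x)=1+xc(x)^2$, which I will use in the equivalent forms $xc(x)^2=c(x)-1$ and $1-xc(x)=\frac{1}{c(x)}$ (the second one because $c(x)-xc(x)^2=1$ gives $c(x)\bigl(1-xc(x)\bigr)=1$). To prove the claim, observe that $f(x)$ has zero constant term, so $c(f(x))$ is a well-defined formal power series, and it is the unique power series $y$ with $y(0)=1$ satisfying $y=1+f(x)\,y^2$. Substituting $y=\frac{1}{c(x)}$ into the right-hand side yields $1+\frac{f(x)}{c(x)^2}=1-\frac{xc(x)^3}{c(x)^2}=1-xc(x)=\frac{1}{c(x)}$, and $\frac{1}{c(x)}$ has constant term $1$; hence by uniqueness $c(f(x))=\frac{1}{c(x)}$.

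Granting the claim, the conclusion is immediate: $f(f(x))=-f(x)\,c(f(x))^3=xc(x)^3\cdot\frac{1}{c(x)^3}=x$, and $g(x)\,g(f(x))=c(x)\cdot\frac{1}{c(x)}=1$, so $(c(x),-xc(x)^3)^2=(1,x)=I$. The one point requiring care is the uniqueness step behind $c(f(x))=\frac{1}{c(x)}$ — we must be sure we have selected the branch of the quadratic that is an honest power series — but this is the standard well-definedness of composition of formal power series; alternatively one can confirm the identity by a short coefficient comparison or by invoking $\operatorname{Rev}(xc(x))=x(1-x)$. Everything else is routine manipulation of the Catalan functional equation.
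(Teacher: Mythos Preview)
Your proof is correct. Both your argument and the paper's hinge on the identity $c\bigl(-xc(x)^3\bigr)=\dfrac{1}{c(x)}$, but you obtain it differently and use it more efficiently. The paper shows $(c,-xc^3)^{-1}=(c,-xc^3)$: it first solves the algebraic reversion equation for $-xc(x)^3$ explicitly (via radicals) to see that $\bar f=f$, and then verifies $\dfrac{1}{c(-xc(x)^3)}=c(x)$ by the substitution $x\mapsto x(1-x)=\overline{xc(x)}$, reducing both sides to $\dfrac{1}{1-x}$. You instead square the array directly and prove the single auxiliary claim $c(f(x))=1/c(x)$ by checking that $1/c(x)$ satisfies the defining equation $y=1+f(x)y^2$ with $y(0)=1$, then read off both $f(f(x))=x$ and $g(x)g(f(x))=1$ from it. Your route avoids the explicit reversion computation and the substitution trick; the paper's route is more computational but makes the connection with $\operatorname{Rev}(xc(x))=x(1-x)$ explicit. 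The uniqueness appeal you flag is indeed the only delicate point, and your justification (unique power-series branch with constant term $1$) is the standard one.
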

\begin{proof} We must show that $(c(x), -xc(x)^3)^{-1}=(c(x), -xc(x)^3)$. Thus we calculate $(c(x), -xc(x)^3)^{-1}$.
We have
$$-x c(x)^3=-\frac{1-3x+(x-1)\sqrt{1-4x}}{2x^2}.$$ Thus we must solve the equation
$$-\frac{1-3u+(u-1)\sqrt{1-4u}}{2u^2}=x$$ to solve for the reversion of $-xc(x)^3$.
Equivalently, we must solve
$$1-4u=\left(\frac{-2xu^2-1+3u}{u-1}\right)^2$$ for $u=u(x)$.
The solution of this equation that satisfies $u(0)=0$ is given by
$$u(x)=-\frac{1-3x+(x-1)\sqrt{1-4x}}{2x^2}=-xc(x)^3.$$
We wish now to show that
$$\frac{1}{c(-xc(x)^3)}=c(x).$$ In order to verify this, we substitute $x(1-x)=\overline{xc(x)}$ for $x$.
We find that
$$\frac{1}{c(-x(1-x)c(x(1-x))^3)}=\frac{1}{1-x}.$$
Similarly, $$c(x(1-x))=\frac{1}{1-x}.$$ By the uniqueness of the reversion process, we conclude that
$$\frac{1}{c(-xc(x)^3)}=c(x).$$
Thus we have $(c(x), -xc(x)^3)^{-1}=(c(x), -xc(x)^3)$ and hence $\left(c(x), -xc(x)^3\right)$ is a Riordan involution.
\end{proof}
The Riordan array $\left(c(x), -xc(x)^3\right)$ begins
$$\left(
\begin{array}{ccccccc}
 1 & 0 & 0 & 0 & 0 & 0 & 0 \\
 1 & -1 & 0 & 0 & 0 & 0 & 0 \\
 2 & -4 & 1 & 0 & 0 & 0 & 0 \\
 5 & -14 & 7 & -1 & 0 & 0 & 0 \\
 14 & -48 & 35 & -10 & 1 & 0 & 0 \\
 42 & -165 & 154 & -65 & 13 & -1 & 0 \\
 132 & -572 & 637 & -350 & 104 & -16 & 1 \\
\end{array}
\right).$$ The general term of the unsigned matrix is $\frac{3k+1}{n+2k+1}\binom{2n+k}{n-k}$. We note that the diagonal sums of this unsigned number triangle $(c(x), xc(x)^3)$ are the moment sequence $a_n$ that begins
$$1, 1, 3, 9, 29, 97, 333, 1165, 4135, 14845, \ldots,$$ or \seqnum{A081696}.
We have the integral representation \cite{Classical}
$$a_n =\frac{1}{\pi} \int_0^4 \left(\frac{-\sqrt{4-x}}{x^2-4x-1}\right)\,dx+\frac{1}{\sqrt{5}}(2-\sqrt{5})^n.$$ The Hankel transform of $a_n$ is $2^n$ and that of the once shifted sequence $a_{n+1}$ is $2^n(1-n)$.
\begin{corollary} The Riordan array $\left(c(x)^2, -xc(x)^3\right)$ is a Riordan involution.
\end{corollary}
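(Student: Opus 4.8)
The plan is to square the array $\left(c(x)^2, -xc(x)^3\right)$ directly, reusing the two identities already established in the proof of the preceding Proposition. Write $f(x) = -xc(x)^3$. That proof shows, on the one hand, that $f$ is its own compositional inverse, so that $f(f(x)) = x$, and, on the other hand, that $\dfrac{1}{c(\bar{f}(x))} = c(x)$; since $\bar{f} = f$ this is the relation $c(f(x)) = \dfrac{1}{c(x)}$.

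Granting these, the verification is immediate. By the Riordan multiplication rule,
$$\left(c(x)^2, f(x)\right)\cdot\left(c(x)^2, f(x)\right) = \left(c(x)^2\, c(f(x))^2,\ f(f(x))\right).$$
The second component equals $x$, and the first component is $c(x)^2 \cdot c(f(x))^2 = c(x)^2 \cdot \dfrac{1}{c(x)^2} = 1$. Hence $\left(c(x)^2, -xc(x)^3\right)^2 = (1, x) = I$, which is what we want.

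It is worth extracting the general mechanism at work. If $(g(x), f(x))$ is any Riordan involution, then equating components in $(g,f)^2 = (1,x)$ yields both $f(f(x)) = x$ and $g(x)\,g(f(x)) = 1$. Consequently, for any integer $m$ (so that $g(x)^m$ still has a nonzero constant term), $\left(g(x)^m, f(x)\right)^2 = \left(\bigl(g(x)g(f(x))\bigr)^m,\ f(f(x))\right) = (1,x)$, so $\left(g(x)^m, f(x)\right)$ is again an involution. The Corollary is the case $m = 2$, $g(x) = c(x)$, $f(x) = -xc(x)^3$, with the hypothesis supplied by the Proposition.

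Since no identity beyond those in the proof of the Proposition is needed, there is no genuine obstacle here; the only point requiring a little attention is correctly reading off $g(x)\,g(f(x)) = 1$ from $(g,f)^2 = I$ and then applying the Riordan product formula in the right order.
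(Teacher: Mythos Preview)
Your proof is correct and is essentially the same as the paper's: the paper simply invokes the general principle that if $(g(x),f(x))$ is an involution then so is $(g(x)^n,f(x))$ for all $n\in\mathbb{Z}$, applied with $n=2$ to the involution $(c(x),-xc(x)^3)$ from the preceding Proposition. You take the extra step of actually verifying that general principle via the Riordan product rule, but the underlying idea is identical.
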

\begin{proof} This follows by the general result that if $(g(x), f(x))$ is an involution then so is $(g(x)^n, f(x))$ for all $n \in \mathbb{Z}$.
\end{proof}
The Riordan array $\left(c(x)^2, -xc(x)^3\right)$ begins
$$\left(
\begin{array}{ccccccc}
 1 & 0 & 0 & 0 & 0 & 0 & 0 \\
 2 & -1 & 0 & 0 & 0 & 0 & 0 \\
 5 & -5 & 1 & 0 & 0 & 0 & 0 \\
 14 & -20 & 8 & -1 & 0 & 0 & 0 \\
 42 & -75 & 44 & -11 & 1 & 0 & 0 \\
 132 & -275 & 208 & -77 & 14 & -1 & 0 \\
 429 & -1001 & 910 & -440 & 119 & -17 & 1 \\
\end{array}
\right).$$
The unsigned matrix is \seqnum{A107842}, with general term $\frac{3k+2}{n+2k+2} \binom{2n+k+1}{n-k}$. The diagonal sums of the unsigned matrix $(c(x)^2, xc(x)^3)$ begin
$$1, 2, 6, 19, 63, 215, 749, 2650, 9490, 34318, 125104, 459152, \ldots.$$ This is \seqnum{A109262}. The Hankel transform of this sequence is given by
$$h_n = F_{2n+1}.$$
We note further that these two diagonal sum sequences appear respectively as the first and second column in the inverse of the Riordan array $(1-x-x^2, x(1-x))$ \seqnum{A109267}.

\section{Riordan arrays and orthogonal polynomials}
We recall the following well-known results (the first is known as ``Favard's Theorem''), which we essentially reproduce from
\cite{Kratt}, to specify the links between orthogonal polynomials \cite{Chihara, Gautschi, Szego}, three term recurrences, and the recurrence coefficients and the generating function of
the moment sequence of the orthogonal polynomials.
\begin{theorem} \label{ThreeT}\cite{Kratt} (Cf. \cite{Viennot}, Th\'eor\`eme $9$ on p.I-4, or \cite{Wall}, Theorem $50.1$). Let $(p_n(x))_{n\ge 0}$
be a sequence of monic polynomials, the polynomial $p_n(x)$ having degree $n=0,1,\ldots$ Then the sequence $(p_n(x))$ is (formally)
orthogonal if and only if there exist sequences $(\alpha_n)_{n\ge 0}$ and $(\beta_n)_{n\ge 1}$ with $\beta_n \neq 0$ for all $n\ge 1$,
such that the three-term recurrence
$$p_{n+1}=(x-\alpha_n)p_n(x)-\beta_n p_{n-1}(x), \quad \text{for}\quad n\ge 1, $$
holds, with initial conditions $p_0(x)=1$ and $p_1(x)=x-\alpha_0$.
\end{theorem}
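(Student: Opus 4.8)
The plan is to prove the two directions of this Favard-type equivalence separately, phrasing ``(formally) orthogonal'' in terms of a linear functional $\mathcal{L}$ on the space of polynomials, so that orthogonality means $\mathcal{L}(p_m p_n)=0$ for $m\neq n$ and $\mathcal{L}(p_n^2)\neq 0$ for all $n$ (equivalently, the Hankel determinants of the moment sequence $\mu_k=\mathcal{L}(x^k)$ are nonzero). The initial conditions $p_0=1$ and $p_1=x-\alpha_0$ are then just a restatement of monicity in degrees $0$ and $1$, so the content is really the recurrence for $n\ge 1$.

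For the forward implication, I would assume $(p_n)$ is orthogonal with respect to $\mathcal{L}$ and use that, since each $p_n$ is monic of degree $n$, the set $\{p_0,\dots,p_{n+1}\}$ is a basis for the polynomials of degree at most $n+1$. Hence $xp_n(x)=p_{n+1}(x)+\sum_{k=0}^{n}c_{n,k}p_k(x)$, with leading coefficient $1$ because both sides are monic of degree $n+1$. Applying $\mathcal{L}$ to $xp_n\cdot p_j$ and using $\mathcal{L}(xp_n p_j)=\mathcal{L}(p_n\cdot xp_j)$ together with $\deg(xp_j)=j+1$, orthogonality forces $c_{n,j}=0$ for every $j<n-1$; the expansion collapses to $xp_n=p_{n+1}+\alpha_n p_n+\beta_n p_{n-1}$ with $\alpha_n=c_{n,n}$ and $\beta_n=-c_{n,n-1}$, which is the claimed recurrence. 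To see $\beta_n\neq 0$, pair the recurrence with $p_{n-1}$: since $xp_{n-1}-p_n$ has degree $<n$, we get $\beta_n\,\mathcal{L}(p_{n-1}^2)=\mathcal{L}(p_n\cdot xp_{n-1})=\mathcal{L}(p_n^2)\neq 0$, so in fact $\beta_n=\mathcal{L}(p_n^2)/\mathcal{L}(p_{n-1}^2)$.

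For the converse, I would suppose the recurrence holds with all $\beta_n\neq 0$ and construct $\mathcal{L}$ directly: set $\mathcal{L}(p_0)=1$ (any nonzero value works) and $\mathcal{L}(p_n)=0$ for $n\ge 1$, then extend linearly; this determines the moments $\mu_k$. I would then prove, by induction, that $\mathcal{L}(p_m p_n)=0$ for $m<n$ and $\mathcal{L}(p_n^2)=\beta_1\beta_2\cdots\beta_n$. In the inductive step I rewrite $p_m=(x-\alpha_{m-1})p_{m-1}-\beta_{m-1}p_{m-2}$; the $p_{m-1}$- and $p_{m-2}$-against-$p_n$ terms vanish by hypothesis, leaving $\mathcal{L}(p_{m-1}\cdot xp_n)$, and substituting $xp_n=p_{n+1}+\alpha_n p_n+\beta_n p_{n-1}$ finishes it: for $m<n$ all resulting terms vanish, and for $m=n$ only $\beta_n\,\mathcal{L}(p_{n-1}^2)$ survives, giving the product formula and, in particular, $\mathcal{L}(p_n^2)\neq 0$.

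The part I expect to need the most care is organizing this last induction so that every pairing it reduces to is genuinely covered by the hypothesis --- especially the diagonal case $m=n$ and the appearance of the higher-index $p_{n+1}$ after multiplying by $x$. Inducting on $m$ while keeping $n\ge m$ arbitrary, and handling $\mathcal{L}(p_m p_n)$ for $n>m$ and $\mathcal{L}(p_m^2)$ together in a single step, is what makes the bookkeeping close up; the remaining manipulations are routine linear algebra, and no coefficient-extraction machinery is needed for this theorem.
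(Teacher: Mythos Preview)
Your argument is the standard, correct proof of Favard's theorem: expand $xp_n$ in the monic basis and use orthogonality to kill all but three coefficients in one direction, and in the other direction define $\mathcal{L}$ on the basis $(p_n)$ and verify $\mathcal{L}(p_mp_n)=\delta_{m,n}\beta_1\cdots\beta_n$ by the obvious double induction. One small slip: from $xp_n=p_{n+1}+c_{n,n}p_n+c_{n,n-1}p_{n-1}$ and $p_{n+1}=(x-\alpha_n)p_n-\beta_np_{n-1}$ you get $\beta_n=c_{n,n-1}$, not $-c_{n,n-1}$; this does not affect anything downstream.

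As for comparison with the paper: there is nothing to compare. The paper does not prove this theorem at all; it merely \emph{recalls} it as a known result, with explicit pointers to Krattenthaler, Viennot (Th\'eor\`eme~9, p.~I-4), and Wall (Theorem~50.1). So your write-up supplies a proof where the paper is content to cite one, and the argument you give is exactly the classical one found in those references.
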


\begin{theorem} \label{CF} \cite{Kratt} (Cf. \cite{Viennot}, Proposition 1, (7), on p. V-$5$, or \cite{Wall}, Theorem $51.1$). Let $(p_n(x))_{n\ge
0}$ be a sequence of monic polynomials, which is orthogonal with respect to some functional $\mathcal{L}$. Let
$$p_{n+1}=(x-\alpha_n)p_n(x)-\beta_n p_{n-1}(x), \quad \text{for}\quad n\ge 1, $$ be the corresponding three-term recurrence which is
guaranteed by Favard's theorem. Then the generating function
$$g(x)=\sum_{k=0}^{\infty} \mu_k x^k $$ for the moments $\mu_k=\mathcal{L}(x^k)$ satisfies
$$g(x)=\cfrac{\mu_0}{1-\alpha_0 x-
\cfrac{\beta_1 x^2}{1-\alpha_1 x -
\cfrac{\beta_2 x^2}{1-\alpha_2 x -
\cfrac{\beta_3 x^2}{1-\alpha_3 x -\cdots}}}}.$$
\end{theorem}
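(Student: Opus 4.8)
The plan is to prove the continued fraction expansion in an equivalent form for the formal Stieltjes transform
\[ F(z)=\mathcal{L}_y\!\left(\frac{1}{z-y}\right)=\sum_{k\ge 0}\mu_k z^{-k-1}, \]
regarded as an element of the ring of formal power series in $1/z$, and then to transport it to $g(x)$ via the substitution $z=1/x$, under which $F(1/x)=x\,g(x)$.

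First I would record the standard consequences of formal orthogonality: from the three-term recurrence and the initial data $p_0=1$, $p_1(x)=x-\alpha_0$, an induction on $n$ shows that $\mathcal{L}\!\left(x^j p_n(x)\right)=0$ for $0\le j<n$, while $\mathcal{L}\!\left(x^n p_n(x)\right)=\mu_0\,\beta_1\beta_2\cdots\beta_n\ne 0$ (this is precisely where the hypothesis $\beta_n\ne 0$ is used). Next I would introduce the numerator polynomials
\[ q_n(z)=\mathcal{L}_y\!\left(\frac{p_n(z)-p_n(y)}{z-y}\right), \]
each of degree $n-1$ in $z$ since $(p_n(z)-p_n(y))/(z-y)$ is a polynomial of degree $n-1$ in $y$. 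Applying $\mathcal{L}_y$ to the three-term recurrence for $p_n$, and using $\mathcal{L}(p_n)=0$ for $n\ge 1$, shows that $q_n$ satisfies the \emph{same} recurrence $q_{n+1}=(z-\alpha_n)q_n-\beta_n q_{n-1}$, with $q_0=0$ and $q_1=\mu_0$. Hence $(q_n,p_n)$ is exactly the canonical numerator/denominator pair of the $J$-fraction $\cfrac{\mu_0}{z-\alpha_0-\cfrac{\beta_1}{z-\alpha_1-\cfrac{\beta_2}{z-\alpha_2-\cdots}}}$, so its $n$-th convergent is $q_n(z)/p_n(z)$.

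The heart of the argument is the approximation identity. Splitting $\dfrac{p_n(z)}{z-y}=\dfrac{p_n(z)-p_n(y)}{z-y}+\dfrac{p_n(y)}{z-y}$ and applying $\mathcal{L}_y$ gives $p_n(z)F(z)-q_n(z)=\mathcal{L}_y\!\left(\dfrac{p_n(y)}{z-y}\right)$. Expanding $\dfrac{1}{z-y}=\sum_{j\ge 0}y^j z^{-j-1}$ and invoking the orthogonality estimates, every term with $j<n$ vanishes, so the right-hand side is a formal power series in $1/z$ of order exactly $z^{-n-1}$; dividing by $p_n(z)=z^n+\cdots$ yields $F(z)-q_n(z)/p_n(z)=O(z^{-2n-1})$. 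Thus the convergents agree with $F(z)$ to ever-increasing order, which is to say the $J$-fraction represents $F(z)$. Finally, substituting $z=1/x$ and clearing the fractions stage by stage — multiply numerator and denominator at level $n$ by $x$, which turns each block $\dfrac{\beta_n}{z-\alpha_n-\cdots}$ into $\dfrac{\beta_n x^2}{1-\alpha_n x-\cdots}$ — converts the $z$-continued fraction into the displayed $x$-continued fraction, and dividing the identity $F(1/x)=x\,g(x)$ through by $x$ produces the claimed formula for $g(x)$.

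The step I expect to be the main obstacle is the one just sketched in the third paragraph: pinning down that $(q_n,p_n)$ really is the convergent sequence of the $J$-fraction and that the $O(z^{-n-1})$ remainder forces the continued fraction to converge (formally) to $F$ and to nothing else; once that is in place, the recurrence check for $q_n$ and the change of variable are routine bookkeeping. An alternative route avoids the numerator polynomials entirely: one shows directly that $g(x)=\mu_0\big/\!\big(1-\alpha_0 x-\beta_1 x^2\,h(x)\big)$, where $h(x)$ is the moment generating function of the once-shifted functional $\mathcal{L}^{(1)}$ — the one whose monic orthogonal polynomials have recurrence coefficients $(\alpha_{n+1})$ and $(\beta_{n+1})$, which exists by Favard's Theorem since $\beta_{n+1}\ne 0$ — normalized to have constant term $1$, and then iterates this identity; there the obstacle moves to constructing $\mathcal{L}^{(1)}$ and relating its moments to those of $\mathcal{L}$.
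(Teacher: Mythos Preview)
The paper does not give its own proof of this theorem: it is quoted as a classical result, with explicit citations to Krattenthaler, Viennot, and Wall, and is used only as background. So there is no ``paper's proof'' to compare against.

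That said, your argument is correct and is essentially the classical proof one finds in the cited sources (Wall, Theorem 51.1, and Chihara's monograph). The chain of steps --- deriving orthogonality relations from the recurrence, introducing the associated polynomials $q_n(z)=\mathcal{L}_y\!\left((p_n(z)-p_n(y))/(z-y)\right)$, verifying they satisfy the same three-term recurrence with initial data $q_0=0$, $q_1=\mu_0$, identifying $(q_n,p_n)$ as the convergents of the $J$-fraction, and establishing the remainder estimate $p_n(z)F(z)-q_n(z)=O(z^{-n-1})$ --- is the standard route, and each step is sound. The final change of variable $z\mapsto 1/x$ is handled correctly. Your alternative via the shifted functional $\mathcal{L}^{(1)}$ is also a legitimate and well-known approach (sometimes called the ``contraction'' or ``associated sequence'' method); either path suffices.
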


The {\it
Hankel
transform} of a given sequence
$A=\{a_0,a_1,a_2,...\}$ is the
sequence of Hankel determinants $\{h_0, h_1, h_2,\dots \}$
where
$h_{n}=|a_{i+j}|_{i,j=0}^{n}$, i.e

\begin{center} \begin{equation}
 \label{gen1}
 A=\{a_n\}_{n\in\mathbb N_0}\quad \rightarrow \quad
 h=\{h_n\}_{n\in\mathbb N_0}:\quad
h_n=\left| \begin{array}{ccccc}
 a_0\ & a_1\  & \cdots & a_n  &  \\
 a_1\ & a_2\  &        & a_{n+1}  \\
\vdots &      & \ddots &          \\
 a_n\ & a_{n+1}\ &    & a_{2n}
\end{array} \right|. \end{equation} \end{center} The Hankel
transform of a sequence $a_n$ and its binomial transform are
equal.

In the case that $a_n$ has a generating function $g(x)$ expressible in the form
$$g(x)=\cfrac{a_0}{1-\alpha_0 x-
\cfrac{\beta_1 x^2}{1-\alpha_1 x-
\cfrac{\beta_2 x^2}{1-\alpha_2 x-
\cfrac{\beta_3 x^2}{1-\alpha_3 x-\cdots}}}}$$ then
we have \cite{Kratt} Heilermann's formula
\begin{equation}\label{Kratt} h_n = a_0^{n+1} \beta_1^n\beta_2^{n-1}\cdots \beta_{n-1}^2\beta_n=a_0^{n+1}\prod_{k=1}^n
\beta_k^{n+1-k}.\end{equation}
Note that this is independent of $\alpha_n$.

Along with the two power series $g(x)$ and $f(x)$, we can define two associated power series
$$A(x)=\frac{x}{\bar{f}(x)},$$ and
$$Z(x)=\frac{1}{\bar{f}(x)}\left(1-\frac{1}{g(\bar{f}(x))}\right).$$
Note that the notation $\bar{f}(x)$ denotes the compositional inverse of the power series $f$. Thus we have
$$\bar{f}(f(x))=x, \quad\text{and}\quad f(\bar{f}(x))=x.$$
We shall also use the notation $\bar{f}(x)=\text{Rev}(f)(x)$.

The (infinite) matrix whose bivariate generating function is given by
$$Z(x)+\frac{A(x)y}{1-xy}$$
is called the production matrix of $M$ \cite{ProdMat_0, ProdMat, PW}. It is equal to the matrix
$$M^{-1}\bar{M}$$ where $\bar{M}$ is the matrix $M$ with its first row removed. It is an infinite lower Hessenberg matrix.

If the Riordan array $M$ has a production matrix $P$ defined by $A(x)$ and $Z(x)$, then we can show that
$$M^{-1}=\left(1-x \frac{Z(x)}{A(x)}, \frac{x}{A(x)}\right).$$

If $Z(x)=\gamma  + \delta x$, and $A(x)=1+\alpha x + \beta x^2$ (where we assume that $\delta \ne 0$ and $\beta \ne 0$), then $P$ will be tri-diagonal. The matrix $P$ begins  $$\left(
\begin{array}{cccccccc}
\gamma & 1 & 0 & 0 & 0 & 0 & 0 & 0 \\
\delta & \alpha & 1 & 0 & 0 & 0 & 0 & 0 \\
 0 & \beta & \alpha & 1 & 0 & 0 & 0 & 0 \\
 0 & 0 & \beta & \alpha & 1 & 0 & 0 & 0 \\
 0 & 0 & 0 & \beta & \alpha & 1 & 0 & 0 \\
 0 & 0 & 0 & 0 & \beta & \alpha & 1 & 0 \\
 0 & 0 & 0 & 0 & 0 & \beta & \alpha & 1 \\
 0 & 0 & 0 & 0 & 0 & 0 & \beta & \alpha \\
\end{array}
\right).$$
In this case, we have
$$M^{-1}=\left(1-x \frac{\gamma+\delta x}{1+\alpha x + \beta x^2}, \frac{x}{1+\alpha x + \beta x^2}\right)=
\left(\frac{1+(\alpha-\gamma)x+(\beta-\delta)x^2}{1+\alpha x+\beta x^2},\frac{x}{1+\alpha x + \beta x^2}\right),$$
and $R=M^{-1}$ will be the coefficient array of a family of orthogonal polynomials $P_n(x)$ \cite{Meixner} where
$$P_n(x)=\sum_{k=0}^n p_{n,k} x^k,$$ where
the general $(n,k)$-th term of $R$ is $p_{n,k}.$ In this case, we call $M$ the \emph{moment matrix} of the family of orthogonal polynomials $P_n(x)$ \cite{Barry_moments}. More generally, if the matrix $M$ has a production matrix that has a $k$-diagonal form, we will continue to call the elements of the first column of $M$ ``moments''.

\begin{example} The Riordan array $\left(\frac{1}{1+x^2}, \frac{x}{1+x^2}\right)$ begins
$$\left(
\begin{array}{ccccccc}
 1 & 0 & 0 & 0 & 0 & 0 & 0 \\
 0 & 1 & 0 & 0 & 0 & 0 & 0 \\
 -1 & 0 & 1 & 0 & 0 & 0 & 0 \\
 0 & -2 & 0 & 1 & 0 & 0 & 0 \\
 1 & 0 & -3 & 0 & 1 & 0 & 0 \\
 0 & 3 & 0 & -4 & 0 & 1 & 0 \\
 -1 & 0 & 6 & 0 & -5 & 0 & 1 \\
\end{array}
\right),$$ and defines the polynomial sequence $P_n(x)$ that begins
$$1,x,x^2-1,x^3-2 x,x^4-3 x^2+1,x^5-4 x^3+3 x,x^6-5 x^4+6 x^2-1,\ldots.$$
These are the scaled Chebyshev polynomials of the second kind $U_n(x/2)$ where we have
$$U_n(x)=[x^n]\frac{1}{1-2xt+t^2},$$ or
$$U_n(x)=\sum_{k=0}^{\lfloor \frac{n}{2} \rfloor} \binom{n-k}{k}(-1)^k (2x)^{n-2k}.$$
\end{example}

More generally, we have the following result \cite{Meixner}.
\begin{proposition}
The Riordan array $\left(\frac{1-\lambda x - \mu x^2}{1+rx+sx^2},\frac{x}{1+rx+sx^2}\right)$ is the coefficient array of the generalized Chebyshev polynomials of the second kind given by
$$Q_n(x)=(\sqrt{s})^n U_n\left(\frac{x-r}{2\sqrt{s}}\right)-\lambda(\sqrt{s})^{n-1} U_{n-1}\left(\frac{x-r}{2\sqrt{s}}\right)-\mu(\sqrt{s})^{n-2} U_{n-2}\left(\frac{x-r}{2\sqrt{s}}\right), \quad n=0,1,2,\ldots$$
\end{proposition}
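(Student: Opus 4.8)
The plan is to identify both sides through their ordinary generating functions in an auxiliary variable $w$, treating everything as a formal power series in $w$ whose coefficients are polynomials in $x$. First I would record the row generating function of a general coefficient array: if $R=(g(w),f(w))$ has $(n,k)$-entry $R_{n,k}=[w^n]g(w)f(w)^k$ and $P_n(x)=\sum_{k=0}^n R_{n,k}x^k$, then summing over $n$ and interchanging the two sums gives $\sum_{n\ge 0}P_n(x)w^n=\sum_{k\ge 0}x^k g(w)f(w)^k=\dfrac{g(w)}{1-xf(w)}$; the interchange is legitimate because $f(w)=f_1w+\cdots$ has zero constant term, so each power of $x$ is collected from only finitely many terms. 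Substituting $g(w)=\dfrac{1-\lambda w-\mu w^2}{1+rw+sw^2}$ and $f(w)=\dfrac{w}{1+rw+sw^2}$ and clearing the common denominator $1+rw+sw^2$, I obtain $\sum_{n\ge 0}P_n(x)w^n=\dfrac{1-\lambda w-\mu w^2}{1+(r-x)w+sw^2}$.

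Next I would compute the generating function of the proposed polynomials $Q_n(x)$. Starting from the classical identity $\sum_{n\ge 0}U_n(y)w^n=\dfrac{1}{1-2yw+w^2}$, I substitute $y=\dfrac{x-r}{2\sqrt s}$ and replace $w$ by $\sqrt s\,w$, obtaining $\sum_{n\ge 0}(\sqrt s)^n U_n\!\left(\dfrac{x-r}{2\sqrt s}\right)w^n=\dfrac{1}{1+(r-x)w+sw^2}$. Because $U_n(-y)=(-1)^nU_n(y)$, the coefficient $(\sqrt s)^n U_n\!\left(\tfrac{x-r}{2\sqrt s}\right)$ is genuinely a polynomial in $x$ with no residual $\sqrt s$, so this step is unambiguous; write $V_n(x)$ for it. Adopting the convention $U_{-1}=U_{-2}=0$ so that the definition of $Q_n$ reads uniformly $Q_n(x)=V_n(x)-\lambda V_{n-1}(x)-\mu V_{n-2}(x)$ for all $n\ge 0$, multiplication by $1-\lambda w-\mu w^2$ then yields $\sum_{n\ge 0}Q_n(x)w^n=\dfrac{1-\lambda w-\mu w^2}{1+(r-x)w+sw^2}$.

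Comparing the two generating functions gives $P_n(x)=Q_n(x)$ for every $n$, which is the assertion. As a consistency check (and an alternative route), I would note that, by the result quoted immediately before the proposition, matching $\dfrac{1-\lambda x-\mu x^2}{1+rx+sx^2}$ with $\dfrac{1+(\alpha-\gamma)x+(\beta-\delta)x^2}{1+\alpha x+\beta x^2}$ forces $\alpha=r$, $\beta=s$, $\gamma=r+\lambda$, $\delta=s+\mu$; hence $R$ is the coefficient array of the orthogonal family whose tridiagonal production matrix has $\alpha_0=r+\lambda$, $\alpha_n=r$ for $n\ge 1$, $\beta_1=s+\mu$, and $\beta_n=s$ for $n\ge 2$. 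Using $U_{n+1}=2yU_n-U_{n-1}$, one checks directly that the $Q_n$ satisfy $Q_0=1$, $Q_1=x-r-\lambda$, $Q_2=(x-r)Q_1-(s+\mu)Q_0$, and $Q_{n+1}=(x-r)Q_n-sQ_{n-1}$ for $n\ge 2$, so Favard's theorem (Theorem \ref{ThreeT}) identifies them with the $P_n$.

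The routine parts are the power-series simplifications. The only points needing care are the treatment of $\sqrt s$ (resolved by the parity of $U_n$, or simply by never leaving the generating-function form) and the reading of the lower-index terms $U_{-1},U_{-2}$ when $n=0,1$; if one takes the recurrence route instead, the one thing to watch is that the exceptional value $\beta_1=s+\mu$ (rather than $s$) is exactly what reproduces the $-\mu V_{n-2}$ tail at the initial step. Beyond this bookkeeping I expect no substantial obstacle.
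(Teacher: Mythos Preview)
Your proof is correct and follows the same generating-function approach as the paper: both identify the row generating function of the Riordan array with the generating function of the $Q_n$ built from the Chebyshev identity $\sum_{n\ge 0}U_n(y)t^n=\frac{1}{1-2yt+t^2}$ and its shifts by $t$ and $t^2$. The paper's version is extremely terse, recording only the shift observation, while you carry out the substitution $y=\frac{x-r}{2\sqrt s}$, $w\mapsto\sqrt s\,w$ and the Riordan row-sum computation explicitly; your added Favard-theorem consistency check is extra and not in the paper.
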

\begin{proof}
We have $$U_n(x)=[x^n]\frac{1}{1-2xt+t^2}.$$ By the method of coefficients \cite{MC} we then have
$$ [x^n]\frac{t}{1-2xt+t^2}=[x^{n-1}]\frac{1}{1-2xt+t^2}=U_{n-1}(x)$$ and similarly
$$ [x^n]\frac{t^2}{1-2xt+t^2}=[x^{n-2}]\frac{1}{1-2xt+t^2}=U_{n-2}(x).$$
\end{proof}

\section{The main results}

\begin{theorem} The coefficient array of the moments $\mu_n^{(1)}(y)$ of the generalized Chebyshev orthogonal polynomials defined by
$$\left(\frac{1+yx+yx^2}{1+x}, \frac{x}{(1+x)^2}\right)$$ is the Riordan involution
$$(c(x), -xc(x)^3).$$
The coefficient array of the moments $\mu_n^{(2)}(y)$ of the generalized Chebyshev orthogonal polynomials defined by
$$\left(\frac{1+yx+yx^2}{(1+x)^2}, \frac{x}{(1+x)^2}\right)$$ is the Riordan involution
$$(c(x)^2, -xc(x)^3).$$
\end{theorem}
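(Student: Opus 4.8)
The plan is to compute the ordinary generating function of the moment sequence as an explicit function of $x$ and $y$, then read off its expansion in powers of $y$ and recognize the coefficient array among the two involutions already constructed.

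By the dictionary of the previous section, the moment matrix attached to the coefficient array $R^{(1)}=\left(\frac{1+yx+yx^2}{1+x},\frac{x}{(1+x)^2}\right)$ is its Riordan inverse $M^{(1)}=(R^{(1)})^{-1}=\left(\frac{1}{g(\bar f(x))},\bar f(x)\right)$, where $g(x)=\frac{1+yx+yx^2}{1+x}$ and $\bar f=\text{Rev}\!\left(\frac{x}{(1+x)^2}\right)$, and the moments $\mu_n^{(1)}(y)$ are the entries of its first column. So the first step is the reversion: $\bar f$ solves $u=x(1+u)^2$, and on setting $t=1+u$ this becomes $t=1+xt^2$, the defining equation of $c(x)$; hence $\bar f(x)=c(x)-1=xc(x)^2$, using the identity $c(x)=1+xc(x)^2$ recorded above.

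The second step is to collapse $g(\bar f(x))$ into a single rational expression in $c(x)$. Writing $c=c(x)$, from $1+\bar f=c$ and $\bar f=c-1=xc^2$ one gets $1+y\bar f+y\bar f^2=1+yc(c-1)$, whence $g(\bar f)=\frac{1+yc(c-1)}{c}=\frac1c+yxc^2$, and therefore
$$\sum_{n\ge 0}\mu_n^{(1)}(y)\,x^n=\frac{1}{g(\bar f(x))}=\frac{c(x)}{1+yxc(x)^3}.$$
Since $xc(x)^3$ has zero constant term, expanding the geometric series in $y$ gives $\frac{c(x)}{1+yxc(x)^3}=\sum_{k\ge 0}c(x)\bigl(-xc(x)^3\bigr)^k y^k$, so the coefficient of $y^k$ in $\mu_n^{(1)}(y)$ is $[x^n]\,c(x)\bigl(-xc(x)^3\bigr)^k$, i.e.\ the $(n,k)$ entry of $(c(x),-xc(x)^3)$. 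That array is a Riordan involution by the Proposition, which settles the first assertion.

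The second assertion is the same computation with $g(x)=\frac{1+yx+yx^2}{(1+x)^2}$: the reversion $\bar f=xc(x)^2$ is unchanged, $g(\bar f)=\frac{1+yc(c-1)}{c^2}=\frac{1}{c^2}+yxc$, the moment generating function becomes $\frac{c(x)^2}{1+yxc(x)^3}$, and expanding in $y$ exactly as before identifies the coefficient array as $(c(x)^2,-xc(x)^3)$, an involution by the Corollary. The only places where care is needed are the reversion of $\frac{x}{(1+x)^2}$ and the repeated use of $c(x)-1=xc(x)^2$ to simplify $g(\bar f)$; once those are in hand the statement drops out of a geometric-series expansion, so I do not expect any genuine obstacle beyond bookkeeping.
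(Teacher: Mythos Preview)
Your proof is correct and follows essentially the same strategy as the paper: invert the Riordan array, identify $\bar f(x)=xc(x)^2$, and show that the moment generating function equals $\frac{c(x)^m}{1+yxc(x)^3}$ for $m=1,2$. The only difference is cosmetic---the paper verifies the last identity by writing both sides as explicit radical expressions in $x$ and $y$ and matching them, whereas you simplify $g(\bar f)$ directly via the relation $c-1=xc^2$, which is arguably cleaner.
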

\begin{proof}
The moments of the orthogonal polynomials whose coefficient array is given by $\left(\frac{1+yx+yx^2}{1+x}, \frac{x}{(1+x)^2}\right)$ are given by the elements of the first column of the inverse matrix
$$\left(\frac{1+yx+yx^2}{1+x}, \frac{x}{(1+x)^2}\right)^{-1}.$$
Letting $g_y^{(1)}(x)=\frac{1+yx+yx^2}{1+x}$, we have
$$\left(\frac{1+yx+yx^2}{1+x}, \frac{x}{(1+x)^2}\right)^{-1}=\left(\frac{1}{g_y^{(1)}(x c(x)^2)}, x c(x)^2\right).$$
Thus the coefficient array of the moments has generating function given by $\frac{1}{g_y^{(1)}(x c(x)^2)}$.
We calculate that
$$\frac{1}{g_y^{(1)}(x c(x)^2)}=\frac{y+(1-2y)x-(x-y)\sqrt{1-4x}}{2(x^2+xy(y-3)+y)}.$$
On the other hand, we have that the bivariate generating function of the Riordan involution $(c(x), -xc(x)^3)$ is given by $\frac{c(x)}{1+yxc(x)^3}$. The result now follows since
$$\frac{c(x)}{1+yxc(x)^3}=\frac{y+(1-2y)x-(x-y)\sqrt{1-4x}}{2(x^2+xy(y-3)+y)}.$$
The moments of the orthogonal polynomials whose coefficient array is given by $\left(\frac{1+yx+yx^2}{(1+x)^2}, \frac{x}{(1+x)^2}\right)$ are given by the elements of the first column of the inverse matrix
$$\left(\frac{1+yx+yx^2}{(1+x)^2}, \frac{x}{(1+x)^2}\right)^{-1}.$$
Letting $g_y^{(2)}(x)=\frac{1+yx+yx^2}{(1+x)^2}$, we have
$$\left(\frac{1+yx+yx^2}{(1+x)^2}, \frac{x}{(1+x)^2}\right)^{-1}=\left(\frac{1}{g_y^{(2)}(x c(x)^2)}, x c(x)^2\right).$$
Thus the coefficient array of the moments has generating function given by $\frac{1}{g_y^{(2)}(x c(x)^2)}$.
We calculate that
$$\frac{1}{g_y^{(2)}(x c(x)^2)}=\frac{1+y-2x-(1-y)\sqrt{1-4x}}{2(x^2+xy(y-3)+y)}.$$
On the other hand, we have that the bivariate generating function of the Riordan involution $(c(x)^2, -xc(x)^3)$ is given by $\frac{c(x)^2}{1+yxc(x)^3}$. The result now follows since
$$\frac{c(x)^2}{1+yxc(x)^3}=\frac{1+y-2x-(1-y)\sqrt{1-4x}}{2(x^2+xy(y-3)+y)}.$$
\end{proof}

The generalized Chebyshev orthogonal polynomials defined by the array $\left(\frac{1+yx+yx^2}{(1+x)^2}, \frac{x}{(1+x)^2}\right)$ are given by
$$P_n(x)=U_n\left(\frac{x-2}{2}\right)+yU_{n-1}\left(\frac{x-2}{2}\right)+yU_{n-2}\left(\frac{x-2}{2}\right).$$
We may ask whether there are values of $r$ and $s$ such that $\left(\frac{1+ryx+syx^2}{(1+x)^2}, \frac{x}{(1+x)^2}\right)$ leads to a Riordan involution, other than $r=s=1$. The answer is no. First, we require that $r=1$ to ensure that the diagonal of the square of the moment coefficient array be all $1$s. Secondly, if we take the case of $\left(\frac{1+yx+syx^2}{(1+x)^2}, \frac{x}{(1+x)^2}\right)$, we find that the square of the moment coefficient matrix begins
$$\left(
\begin{array}{cccccc}
 1 & 0 & 0 & 0 & 0 & 0 \\
 0 & 1 & 0 & 0 & 0 & 0 \\
 2-2 s & 0 & 1 & 0 & 0 & 0 \\
 2 (1-s) & -2 \left(s^2+s-2\right) & 0 & 1 & 0 & 0 \\
 5 s^2-16 s+11 & -s^3-2 s^2-s+4 & -4 s^2-2 s+6 & 0 & 1 & 0 \\
 8 \left(s^2-4 s+3\right) & 8 s^3-14 s^2-20 s+26 & -2 \left(s^3+2 s^2-3\right) & -6 s^2-2 s+8 & 0 & 1 \\
\end{array}
\right).$$
Thus $s=1$ is a necessary condition for there to be an involution. By the above, it is also a sufficient condition.
\section{The Hankel transform of the moments}
We have the following result.
\begin{proposition} The generating function $g_y^{(2)}(x)$ of the moment sequence $\mu_n^{(2)}(y)$ can be expressed as the continued fraction $$\cfrac{1}{1-(2-y)x-
\cfrac{(1-y)x^2}{1-2x-
\cfrac{x^2}{1-2x-
\cfrac{x^2}{1-2x-\cdots}}}}.$$
\end{proposition}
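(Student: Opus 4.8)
The plan is to read the continued fraction off the tridiagonal production matrix of the moment matrix and then invoke Theorem~\ref{CF}. By the preceding main theorem, $R=\left(\frac{1+yx+yx^2}{(1+x)^2},\frac{x}{(1+x)^2}\right)$ is the coefficient array of a family of orthogonal polynomials, so $M=R^{-1}$ is the corresponding moment matrix; the generating function $g_y^{(2)}(x)=\sum_{n\ge 0}\mu_n^{(2)}(y)x^n$ of the moment sequence is the generating function of the first column of $M$, and by the main theorem it equals $\frac{c(x)^2}{1+yxc(x)^3}$, so in particular $\mu_0^{(2)}(y)=1$.

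First I would extract the data $Z(x),A(x)$ of the production matrix from the identity $M^{-1}=\left(1-x\frac{Z(x)}{A(x)},\frac{x}{A(x)}\right)$ recalled above. Comparing second components gives $A(x)=(1+x)^2=1+2x+x^2$; comparing first components then gives $xZ(x)=A(x)-(1+yx+yx^2)=(2-y)x+(1-y)x^2$, so $Z(x)=(2-y)+(1-y)x$. Since $Z$ is affine and $A$ is quadratic, the production matrix $P$ of $M$ is tridiagonal, with superdiagonal entries all $1$, main diagonal $(2-y,2,2,\ldots)$ and subdiagonal $(1-y,1,1,\ldots)$. This $P$ is the Jacobi matrix of the orthogonal family, so in the three-term recurrence of Theorem~\ref{ThreeT} we have $\alpha_0=2-y$, $\alpha_n=2$ for $n\ge 1$, $\beta_1=1-y$ and $\beta_n=1$ for $n\ge 2$.

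Substituting these coefficients, together with $\mu_0^{(2)}(y)=1$, into the Jacobi continued fraction of Theorem~\ref{CF} yields precisely the asserted expansion, which finishes the proof.

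As a self-contained check not relying on the production-matrix formalism, one can verify the continued fraction directly. Write $F(x)$ for the periodic tail
$$F(x)=\cfrac{1}{1-2x-\cfrac{x^2}{1-2x-\cfrac{x^2}{1-2x-\cdots}}},$$
so that the asserted continued fraction equals $\dfrac{1}{1-(2-y)x-(1-y)x^2F(x)}$. Then $F=\dfrac{1}{1-2x-x^2F}$, i.e.\ $x^2F^2-(1-2x)F+1=0$, and the branch with $F(0)=1$ is $F(x)=\dfrac{1-2x-\sqrt{1-4x}}{2x^2}=c(x)^2$. Using the defining relation $xc(x)^2=c(x)-1$ (hence $x^2c(x)^2=x\left(c(x)-1\right)$ and $xc(x)^3=c(x)^2-c(x)$), the full fraction collapses:
$$\cfrac{1}{1-(2-y)x-(1-y)x^2c(x)^2}=\frac{1}{1-x-(1-y)xc(x)}=\frac{c(x)^2}{1+yxc(x)^3},$$
which is the moment generating function $g_y^{(2)}(x)$ of the preceding theorem. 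The only mildly delicate point in either approach is the bookkeeping in this last simplification (and, in the first approach, keeping straight the indexing convention relating the production matrix to the Jacobi matrix); everything else is a routine application of the Favard/continued-fraction dictionary, so I do not anticipate a genuine obstacle.
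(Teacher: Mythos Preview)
Your proof is correct. Your ``self-contained check'' is essentially the paper's own proof: the paper also identifies the periodic tail as $c(x)^2$, substitutes into the first level to get $\dfrac{1}{1-(2-y)x-(1-y)x^2c(x)^2}$, and then simplifies this to match $g_y^{(2)}(x)$ (the paper passes through the closed form $\dfrac{2}{1+y-2x+(1-y)\sqrt{1-4x}}$ rather than through $\dfrac{c(x)^2}{1+yxc(x)^3}$, but this is cosmetic).

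Your first approach, however, is genuinely different and arguably more illuminating. The paper simply verifies the continued fraction by direct computation; you instead \emph{derive} the Jacobi coefficients by reading $A(x)=(1+x)^2$ and $Z(x)=(2-y)+(1-y)x$ off the identity $M^{-1}=\bigl(1-xZ(x)/A(x),\,x/A(x)\bigr)$, obtaining the tridiagonal production matrix and hence the $\alpha_n,\beta_n$ without any guesswork, after which Theorem~\ref{CF} gives the continued fraction immediately. This route explains \emph{why} the coefficients are $(2-y,1-y)$ followed by $(2,1)$'s, and it would generalize mechanically to the $(a,b)$-family in the next section. The paper's computation is shorter but treats the continued fraction as something to be checked rather than discovered.
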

\begin{proof} Let $G(x,y)$ be the continued fraction above. Then
$$G(x,y)=\frac{1}{1-(2-y)x-(1-y)x^2 G_1(x,y)},$$ where $G_1(x,y)$ is defined by the continued fraction
$$\frac{1}{1-2x-
\cfrac{x^2}{1-2x-
\cfrac{x^2}{1-2x-\cdots}}}.$$
It is well known that $G_1(x,y)=c(x)^2$. Thus
$$G(x,y)=\frac{1}{1-(2-y)x-(1-y)x^2 c(x)^2}=\frac{2}{1+y-2x+(1-y)\sqrt{1-4x}}.$$
A simple calculation now shows that $G(x,y)=g_y^{(2)}(x)$.
\end{proof}
\begin{corollary}
The Hankel transform $h_n^{(2)}(y)$ of the moments $\mu_n^{(2)}(y)$ is given by
$$h_n(y)=(1-y)^n.$$
\end{corollary}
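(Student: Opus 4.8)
The plan is to obtain the Hankel transform directly from the continued-fraction expansion of $g_y^{(2)}(x)$ furnished by the preceding proposition, via Heilermann's formula \eqref{Kratt}. Reading off the parameters of the continued fraction
$$g_y^{(2)}(x)=\cfrac{1}{1-(2-y)x-\cfrac{(1-y)x^2}{1-2x-\cfrac{x^2}{1-2x-\cdots}}},$$
we see that the moment sequence $\mu_n^{(2)}(y)$ has $\mu_0^{(2)}(y)=1$, recurrence coefficients $\alpha_0=2-y$ and $\alpha_n=2$ for $n\ge 1$, and $\beta_1=1-y$ together with $\beta_n=1$ for every $n\ge 2$.

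Since Heilermann's formula $h_n=a_0^{n+1}\prod_{k=1}^{n}\beta_k^{n+1-k}$ does not involve the $\alpha_n$, I would simply substitute $a_0=\mu_0^{(2)}(y)=1$ and the above values of the $\beta_k$, obtaining
$$h_n^{(2)}(y)=1^{n+1}\cdot(1-y)^{n}\cdot 1^{n-1}\cdot 1^{n-2}\cdots 1^{1}=(1-y)^{n},$$
which is the claimed formula.

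There is essentially no obstacle here; all the real work was already done in establishing the continued-fraction form. The only point requiring a little care is the indexing: one must check that in the expansion above the single ``anomalous'' partial numerator $(1-y)x^2$ occupies the first slot (so that it is $\beta_1$) while every later partial numerator equals exactly $x^2$ (so that $\beta_k=1$ for $k\ge 2$), which is immediate from the way $G_1(x,y)=c(x)^2$ was peeled off in the proof of that proposition. One may also remark that when $y=1$ the coefficient $\beta_1$ vanishes, so the orthogonality hypothesis of Favard's theorem fails; in that degenerate case $\mu_n^{(2)}(1)\equiv 1$ and the formula still records the correct Hankel determinants $(1-y)^n=0^n$ under the convention $0^0=1$. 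As a sanity check one could compute $h_1^{(2)}(y)$ and $h_2^{(2)}(y)$ directly from the first few moments read off the inverse Riordan array, but this is not needed for the proof.
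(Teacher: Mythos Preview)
Your proof is correct and follows exactly the approach the paper intends: the corollary is stated immediately after the continued-fraction proposition precisely so that Heilermann's formula \eqref{Kratt} can be applied with $a_0=1$, $\beta_1=1-y$, and $\beta_k=1$ for $k\ge 2$. Your remarks on indexing and the degenerate case $y=1$ are accurate but not required.
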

Thus the coefficient array of the polynomial sequence $h_n^{(2)}(y)$ given by the Hankel transform of $\mu_n^{(2)}(y)$ is given by the signed Pascal's triangle $\left((-1)^n \binom{n}{k}\right)$ which begins
$$\left(
\begin{array}{ccccccc}
 1 & 0 & 0 & 0 & 0 & 0 & 0 \\
 1 & -1 & 0 & 0 & 0 & 0 & 0 \\
 1 & -2 & 1 & 0 & 0 & 0 & 0 \\
 1 & -3 & 3 & -1 & 0 & 0 & 0 \\
 1 & -4 & 6 & -4 & 1 & 0 & 0 \\
 1 & -5 & 10 & -10 & 5 & -1 & 0 \\
 1 & -6 & 15 & -20 & 15 & -6 & 1 \\
\end{array}
\right).$$
The coefficient array of the polynomial sequence $h_n^{(1)}(y)$ given by the Hankel transform of $\mu_n^{(1)}(y)$ begins
$$\left(
\begin{array}{cccccc}
 1 & 0 & 0 & 0 & 0 & 0 \\
 1 & -2 & 0 & 0 & 0 & 0 \\
 1 & -4 & 3 & 0 & 0 & 0 \\
 1 & -6 & 10 & -4 & 0 & 0 \\
 1 & -8 & 21 & -20 & 5 & 0 \\
 1 & -10 & 36 & -56 & 35 & -6 \\
\end{array}
\right),$$ with general term $(-1)^k \binom{2n+1-k}{2n+1-2k}$.
The reversal of this triangle, which begins
$$\left(
\begin{array}{cccccc}
 1 & 0 & 0 & 0 & 0 & 0 \\
 -2 & 1 & 0 & 0 & 0 & 0 \\
 3 & -4 & 1 & 0 & 0 & 0 \\
 -4 & 10 & -6 & 1 & 0 & 0 \\
 5 & -20 & 21 & -8 & 1 & 0 \\
 -6 & 35 & -56 & 36 & -10 & 1 \\
\end{array}
\right),$$ is the Riordan array $\left(\frac{1}{(1+x)^2}, \frac{x}{(1+x)^2}\right)$.

\section{A general result}
Using the same methods as above, we have the following more general result.
\begin{proposition} The coefficient array of the moments of the paramaterized orthogonal polynomials whose coefficient array is given by the Riordan array
$$\left(\frac{1+(2-a+y)x+(-a+b+1+y)x^2}{1+ax+bx^2}, \frac{x}{1+ax+bx^2}\right)$$ is given by the Riordan involution $(g(x), f(x))$ where we have
$$g(x)=\frac{2b}{1-a+2b+(a-1)(a-4b)x+(a-1)\sqrt{1-2ax+(a^2-4b)x^2}},$$ and
$$f(x)=\frac{\sqrt{1-2ax+(a^2-4b)x^2}+(a-2b)x-1}{1-a+2b+(a-1)(a-4b)x+(a-1)\sqrt{1-2ax+(a^2-4b)x^2}}.$$
\end{proposition}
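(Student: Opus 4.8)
The plan is to mirror the proof of the main theorem: first pin down the moment-coefficient array explicitly as a Riordan array $(g(x),f(x))$, and then check the two involution identities $f(f(x))=x$ and $g(x)g(f(x))=1$ by direct computation.

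Write $R$ for the given coefficient array, $g_R(x)=\frac{1+(2-a+y)x+(-a+b+1+y)x^2}{1+ax+bx^2}$ and $f_R(x)=\frac{x}{1+ax+bx^2}$. By the Riordan inverse formula the moment matrix is $M=R^{-1}=\bigl(1/g_R(\phi(x)),\phi(x)\bigr)$, where $\phi(x)=\overline{f_R}(x)$ is the branch of $bx\phi^2+(ax-1)\phi+x=0$ with $\phi(0)=0$, namely $\phi(x)=\frac{1-ax-\sqrt{1-2ax+(a^2-4b)x^2}}{2bx}$; crucially it satisfies $1+a\phi+b\phi^2=\phi/x$. Substituting this relation into $1/g_R(\phi(x))$ makes the expression collapse:
$$\frac{1}{g_R(\phi(x))}=\frac{1}{\bigl(1+(2-2a)x+(1-a)x\phi(x)\bigr)+yx\bigl(1+\phi(x)\bigr)}=\frac{g(x)}{1-yf(x)},$$
with $g(x)=\dfrac{1}{1+(2-2a)x+(1-a)x\phi(x)}$ and $f(x)=\dfrac{-x(1+\phi(x))}{1+(2-2a)x+(1-a)x\phi(x)}$. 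Since $\sum_{n}\mu_n(y)x^n=\frac{g(x)}{1-yf(x)}$ is exactly the bivariate generating function of the Riordan array $(g(x),f(x))$, the coefficient array of the moments is $(g(x),f(x))$; plugging the closed form of $\phi$ into $g$ and $f$ and clearing the common factor $2b$ then yields the two displayed formulas. This last step is a routine, if bulky, algebraic simplification.

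For the involution part, set $v=f(x)$ and $c=1-a$, so that $f(x)=\frac{-xt}{1+cx(1+t)}$ with $t=1+\phi(x)$. Solving this linear-fractional relation for $t$ gives $t=\frac{-v(1+cx)}{x(1+cv)}$, hence $\phi(x)=t-1=\frac{-(x+v+2cxv)}{x(1+cv)}$. Feeding this back in, together with the defining quadratic of $\phi$ rewritten as a quadratic in $t$, and clearing denominators produces a polynomial relation $\Psi(x,v)=0$ of degree at most $2$ in each of $x$ and $v$, which one checks is symmetric under $x\leftrightarrow v$: concretely the coefficients of $x$ and $v$ are both $1$, those of $x^2$ and $v^2$ both $1+b-a$, those of $x^2v$ and $xv^2$ both $(1-a)(2+4b-3a)$, with the manifestly symmetric terms $(3-4a+2b)xv$ and $(1-a)^2(1-2a+4b)x^2v^2$ (for $a=2$, $b=1$ this is $x+v-3xv+x^2v^2=0$, the relation behind the involution $(c(x)^2,-xc(x)^3)$). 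Since $v=f(x)$ is the branch of $\Psi=0$ vanishing at $x=0$, symmetry of $\Psi$ forces $f(f(x))=x$. The same substitution also gives $1/g(x)=1+2cx+cx\phi(x)=\frac{1+cx}{1+cv}$; swapping $x\leftrightarrow v$ yields $1/g(v)=\frac{1+cv}{1+cx}$, so $g(x)g(f(x))=1$. Hence $(g(x),f(x))\cdot(g(x),f(x))=\bigl(g(x)g(f(x)),f(f(x))\bigr)=(1,x)$.

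The main obstacle is the symmetry check for $\Psi(x,v)$: expanding the bidegree-$(2,2)$ relation and matching all of the coefficient pairs under $x\leftrightarrow v$ is where essentially all of the arithmetic lives. Everything else — the reversion of $f_R$, the collapse of $1/g_R(\phi)$, the identification with the bivariate generating function of $(g,f)$, and the $g$-identity — is short once the relation $1+a\phi+b\phi^2=\phi/x$ is in hand.
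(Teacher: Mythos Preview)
Your argument is correct. The paper itself offers no detailed proof here beyond the phrase ``Using the same methods as above,'' pointing back to the proof of the main theorem; your first half---inverting $R$, using $1+a\phi+b\phi^2=\phi/x$ to collapse $1/g_R(\phi)$ to the form $g(x)/(1-yf(x))$, and reading this as the bivariate generating function of $(g,f)$---is exactly that method, and your identification of $g$ and $f$ with the displayed closed forms checks out line by line.

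Where you go further than the paper is the involution step. The paper never verifies $(g,f)^2=(1,x)$ in the general $(a,b)$ case; its only explicit involution argument is Proposition~2, done for the single case $a=2$, $b=1$ via a direct reversion of $-xc(x)^3$. Your route---eliminating $\phi$ to obtain a bidegree-$(2,2)$ relation $\Psi(x,v)=0$, checking its $x\leftrightarrow v$ symmetry (which indeed holds: writing $c=1-a$, the difference $\Psi(x,v)-\Psi(v,x)$ factors as $(v-x)(1+cx)(1+cv)+(x-v)(1+cx)(1+cv)=0$), and then reading off $f\circ f=\mathrm{id}$ from the uniqueness of the vanishing branch---is a genuinely different and cleaner argument than a brute-force reversion. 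The companion identity $1/g(x)=(1+cx)/(1+cv)$ then gives $g(x)g(f(x))=1$ for free. This symmetric-curve approach scales to the two-parameter family with no extra work, whereas the paper's Proposition~2 style computation would become unwieldy here.
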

In this case, the moments will have a generating function expressible as the continued fraction
$$\cfrac{1}{1-(-y+2a-2)x+
\cfrac{(y-a+1)x^2}{1-ax-
\cfrac{bx^2}{1-ax-
\cfrac{bx^2}{1-ax-\cdots}}}}.$$
The moments thus have the Hankel transform
$$h_n=(-1)^n (y-a+1)^n b^{\binom{n}{2}}.$$
In particular, the first column elements of the Riordan involution (the case $y=0$) have their generating function expressible as
$$\cfrac{1}{1-(2a-2)x-
\cfrac{(a-1)x^2}{1-ax-
\cfrac{bx^2}{1-ax-
\cfrac{bx^2}{1-ax-\cdots}}}},$$
with Hankel transform
$$h_n=(a-1)^n b^{\binom{n}{2}}.$$
\begin{example} We take the case of $a=3$ and $b=2$. Thus the coefficient matrix of the orthogonal polynomials in question is given by
$$\left(\frac{1+(y-1)x+yx^2}{1+3x+2x^2}, \frac{x}{1+3x+2x^2}\right).$$ The moments then have generating function $$\mu(x)=\frac{4}{2+y-(10-y)x-(y-2)\sqrt{1-6x+x^2}}.$$
This is then the bivariate generating function of the Riordan involution
$$\left(\frac{\sqrt{1-6x+x^2}+5x-1}{2x(1-6x)}, \frac{(2x-1)\sqrt{1-6x+x^2}-2x^2-5x+1}{4x(1-6x)}\right).$$
This matrix begins
$$\left(
\begin{array}{ccccccc}
 1 & 0 & 0 & 0 & 0 & 0 & 0 \\
 4 & -1 & 0 & 0 & 0 & 0 & 0 \\
 18 & -9 & 1 & 0 & 0 & 0 & 0 \\
 86 & -63 & 14 & -1 & 0 & 0 & 0 \\
 426 & -403 & 133 & -19 & 1 & 0 & 0 \\
 2162 & -2469 & 1070 & -228 & 24 & -1 & 0 \\
 11166 & -14769 & 7857 & -2212 & 348 & -29 & 1 \\
\end{array}
\right).$$
The first column of this involution matrix begins
$$1, 4, 18, 86, 426, 2162, 11166, 58438, 309042, 1648154, 8851206,\ldots$$ which is \seqnum{A225887}. This counts the number of Schr\"oder paths of semi-length $n$ for which the $(2,0)$-steps that are on the horizontal axis come in $3$ colors. This sequence has generating function given by the continued fraction
$$\cfrac{1}{1-4x-
\cfrac{2x^2}{1-3x-
\cfrac{2x^2}{1-3x-
\cfrac{2x^2}{1-3x-\cdots}}}}.$$
The row sums of the involution matrix begin
$$1, 3, 10, 36, 138, 558, 2362, 10398, 47326,\ldots.$$
These have their generating function given by the continued fraction
$$\cfrac{1}{1-3x-
\cfrac{x^2}{1-3x-
\cfrac{2x^2}{1-3x-
\cfrac{2x^2}{1-3x-\cdots}}}}.$$
The row sums of the matrix obtained by taking the absolute value of the entries of the involution matrix begin
$$1, 5, 28, 164, 982, 5954, 36382, 223466, 1377538,\ldots.$$
The generating function of this sequence can be expressed as
$$\cfrac{1}{1-5x-
\cfrac{3x^2}{1-3x-
\cfrac{2x^2}{1-3x-
\cfrac{2x^2}{1-3x-\cdots}}}}.$$
\end{example}
\begin{example} For the case $a=1$ and $b=2$ we obtain the Riordan involution
$$\left(1, \frac{\sqrt{1-2x-7x^2}-3x-1}{4}\right)$$ which begins
$$\left(
\begin{array}{ccccccc}
 1 & 0 & 0 & 0 & 0 & 0 & 0 \\
 0 & -1 & 0 & 0 & 0 & 0 & 0 \\
 0 & -1 & 1 & 0 & 0 & 0 & 0 \\
 0 & -1 & 2 & -1 & 0 & 0 & 0 \\
 0 & -3 & 3 & -3 & 1 & 0 & 0 \\
 0 & -7 & 8 & -6 & 4 & -1 & 0 \\
 0 & -21 & 21 & -16 & 10 & -5 & 1 \\
\end{array}
\right).$$
The row sums of this matrix begin
$$1, -1, 0, 0, -2, -2, -10, -26, -86,\ldots$$ with generating function given by
$$\cfrac{1}{1+x+
\cfrac{x^2}{1-x-
\cfrac{2x^2}{1-x-
\cfrac{2x^2}{1-x-\cdots}}}}.$$
This sequence has its Hankel transform given by
$$h_n =(-1)^n 2^{\binom{n}{2}}.$$
The row sums of the absolute value matrix begin
$$1, 1, 2, 4, 10, 26, 74, 218, 670,\ldots.$$
This sequence has a generating function expressible as the continued fraction
$$\cfrac{1}{1-x-
\cfrac{x^2}{1-x-
\cfrac{2x^2}{1-x-
\cfrac{2x^2}{1-x-\cdots}}}}.$$
Its Hankel transform is given by
$$h_n = 2^{\binom{n}{2}}.$$
\end{example}

\section{A factorization theorem}
We begin this section with an example that will motivate the general factorization result that will follow.
\begin{example}
We consider the Riordan array $\left(\frac{1}{(1+x)^2}, \frac{x}{(1+x)^2}\right)$ which begins
$$\left(
\begin{array}{ccccccc}
 1 & 0 & 0 & 0 & 0 & 0 & 0 \\
 -2 & 1 & 0 & 0 & 0 & 0 & 0 \\
 3 & -4 & 1 & 0 & 0 & 0 & 0 \\
 -4 & 10 & -6 & 1 & 0 & 0 & 0 \\
 5 & -20 & 21 & -8 & 1 & 0 & 0 \\
 -6 & 35 & -56 & 36 & -10 & 1 & 0 \\
 7 & -56 & 126 & -120 & 55 & -12 & 1 \\
\end{array}
\right).$$ The general element of this matrix is $(-1)^{n-k}\binom{n+k+1}{2k+1}$. The inverse of this matrix
is given by
$$\left(\frac{1}{(1+x)^2}, \frac{x}{(1+x)^2}\right)^{-1}=(c(x)^2, x c(x)^2),$$ which begins
$$\left(
\begin{array}{ccccccc}
 1 & 0 & 0 & 0 & 0 & 0 & 0 \\
 2 & 1 & 0 & 0 & 0 & 0 & 0 \\
 5 & 4 & 1 & 0 & 0 & 0 & 0 \\
 14 & 14 & 6 & 1 & 0 & 0 & 0 \\
 42 & 48 & 27 & 8 & 1 & 0 & 0 \\
 132 & 165 & 110 & 44 & 10 & 1 & 0 \\
 429 & 572 & 429 & 208 & 65 & 12 & 1 \\
\end{array}
\right).$$
We now form the Riordan array given by the following product.
$$\left(\frac{1}{(1+x)^2}, \frac{x}{(1+x)^2}\right) \cdot (c(-x)^2, -x c(-x)^2).$$
We obtain the Riordan array
$$\left(\frac{1+4x+x^2-(1+x)\sqrt{1+6x+x^2}}{2x^2},\frac{1+4x+x^2-(1+x)\sqrt{1+6x+x^2}}{2x}\right).$$
This matrix begins
$$\left(
\begin{array}{ccccccc}
 1 & 0 & 0 & 0 & 0 & 0 & 0 \\
 -4 & -1 & 0 & 0 & 0 & 0 & 0 \\
 16 & 8 & 1 & 0 & 0 & 0 & 0 \\
 -68 & -48 & -12 & -1 & 0 & 0 & 0 \\
 304 & 264 & 96 & 16 & 1 & 0 & 0 \\
 -1412 & -1408 & -652 & -160 & -20 & -1 & 0 \\
 6752 & 7432 & 4080 & 1296 & 240 & 24 & 1 \\
\end{array}
\right).$$
This matrix is a Riordan involution.
\end{example}
In a similar vein, we have the following result.
\begin{theorem} For a given Riordan array $(g(x), f(x))$, the product
$$(g(x), f(x))\cdot \left(\frac{1}{g(\bar{f}(-x))}, \bar{f}(-x)\right)$$ is a Riordan involution. \end{theorem}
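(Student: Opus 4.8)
The plan is to recognize the second factor as $(1,-x)$ composed with the inverse of the first factor, so that the entire product is a conjugate of the Riordan involution $(1,-x)$ and is therefore automatically an involution.

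Write $M=(g(x),f(x))$ and $J=(1,-x)$, and recall from the preliminaries that $M^{-1}=\left(\frac{1}{g(\bar f(x))},\bar f(x)\right)$ and that $J^{2}=(1,x)=I$. The first step is to compute $J\cdot M^{-1}$ using the group law $(g,f)\cdot(u,v)=(g\,u(f),v(f))$: with $(g,f)=(1,-x)$ and $(u,v)=\left(\frac{1}{g(\bar f(x))},\bar f(x)\right)$, composing with $-x$ simply replaces $x$ by $-x$ in both entries, so
$$J\cdot M^{-1}=\left(\frac{1}{g(\bar f(-x))},\bar f(-x)\right),$$
which is exactly the second factor appearing in the statement. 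Hence the product in the theorem equals $M\cdot J\cdot M^{-1}$.

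The proof then finishes in one line: $(MJM^{-1})^{2}=MJ(M^{-1}M)JM^{-1}=MJ^{2}M^{-1}=MM^{-1}=I$, so the product squares to the identity and is a Riordan involution. Conceptually, the involutive property here is nothing but the general fact that a conjugate of an involution is again an involution.

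I do not anticipate any genuine obstacle; the only point requiring care is the bookkeeping in the identity $J\cdot M^{-1}=\left(\frac{1}{g(\bar f(-x))},\bar f(-x)\right)$, namely confirming that left multiplication by $(1,-x)$ substitutes $-x$ into \emph{both} components of $M^{-1}$ rather than only into its ``$f$-variable''. This description also exhibits the motivating example of this section as a special case: for $M=\left(\frac{1}{(1+x)^{2}},\frac{x}{(1+x)^{2}}\right)$ one has $M^{-1}=(c(x)^{2},xc(x)^{2})$ and $J\cdot M^{-1}=(c(-x)^{2},-xc(-x)^{2})$, so the displayed product there is precisely $MJM^{-1}$.
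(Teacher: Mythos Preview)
Your proof is correct, and it takes a genuinely different route from the paper's. The paper proceeds by brute-force verification: it first multiplies out the product to obtain $\left(\frac{g(x)}{g(\bar f(-f(x)))},\,\bar f(-f(x))\right)$, then squares this pair and simplifies, using $\bar f(-f(\bar f(-f(x))))=\bar f(f(x))=x$ to reach $(1,x)$. Your argument instead identifies the second factor as $J\cdot M^{-1}$ with $J=(1,-x)$, so the whole product is the conjugate $MJM^{-1}$, and the involutive property follows in one line from $J^{2}=I$. Your approach is more conceptual and immediately explains \emph{why} the construction works (conjugates of involutions are involutions, and indeed replacing $J$ by any Riordan involution would give another one); the paper's computation is more hands-on and yields the explicit form of the resulting pair along the way, which is what gets used in the subsequent examples and corollaries. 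Your final sanity check against the $\left(\frac{1}{(1+x)^2},\frac{x}{(1+x)^2}\right)$ example is exactly right and matches the motivating example that precedes the theorem.
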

\begin{proof}
We have that
$$(g(x), f(x))\cdot \left(\frac{1}{g(\bar{f}(-x))}, \bar{f}(-x)\right)=\left(\frac{g(x)}{g(\bar{f}(-f(x)))}, \bar{f}(-f(x))\right).$$
Then
\begin{align*}
\left(\frac{g(x)}{g(\bar{f}(-f(x)))}, \bar{f}(-f(x))\right)^2&=\left(\frac{g(x)}{g(\bar{f}(-f(x)))}, \bar{f}(-f(x))\right)\cdot \left(\frac{g(x)}{g(\bar{f}(-f(x)))}, \bar{f}(-f(x))\right)\\
&=\left(\frac{g(x)}{g(\bar{f}(-f))} \frac{g(\bar{f}(-f))}{g(\bar{f}(-f(\bar{f}(-f(x)))))}, \bar{f}(-f(\bar{f}(-f(x))))\right)\\
&=\left(\frac{g(x)}{g(\bar{f}(-f(\bar{f}(-f(x)))))}, \bar{f}(-(-f(x)))\right)\\
&=(1, x).\end{align*}
\end{proof}
Thus if we begin with the Riordan array
$$(g, f)=\left(\frac{1+ c x + d x^2}{1+a x + b x^2}, \frac{x}{1+ a x + b x^2}\right),$$ we are led to the Riordan involution given by $(G, F)$ where we have
\begin{scriptsize}
$$G(x)=\frac{b(1+cx+dx^2)(S-bx^2-2ax-1)}
{(d+(2ad-bc)x+bdx^2)S-b^2dx^4+b(bc-4ad)x^3+2(2a^2d-abc+b^2)x^2+(bc-4ad)x-d},$$
\end{scriptsize} and
$$F(x)=\frac{-1-2ax-bx^2+S}{2bx}=\frac{-x}{1+2ax+bx^2}c\left(\frac{bx^2}{(1+2ax+bx^2)^2}\right),$$
with
$$S=\sqrt{1+4ax+2(2a^2-b)x^2+4abx^3+b^2x^4}.$$
\begin{corollary}
The Riordan array
$$\left(1,\frac{-x}{1+2ax+bx^2}c\left(\frac{bx^2}{(1+2ax+bx^2)^2}\right)\right)$$ is a Riordan involution.
\end{corollary}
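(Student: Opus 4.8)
\section*{Proof proposal}

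The plan is to prove the composition identity $F(F(x))=x$ for
$$F(x)=\frac{-x}{1+2ax+bx^2}\,c\!\left(\frac{bx^2}{(1+2ax+bx^2)^2}\right);$$
the corollary is then immediate, since $(1,F)\cdot(1,F)=(1,F(F(x)))=(1,x)=I$. The conceptual point is that $F$ is exactly the second component of the involution manufactured by the factorization theorem from the pair $f(x)=\frac{x}{1+ax+bx^2}$. Indeed, for \emph{any} Riordan array $(g,f)$ the product $(g,f)\cdot\left(\frac{1}{g(\bar f(-x))},\bar f(-x)\right)$ has second component $\bar f(-f(x))$ (this is the very first line of the proof of the factorization theorem), and the main claim to verify is that $\bar f(-f(x))=F(x)$ when $f(x)=\frac{x}{1+ax+bx^2}$.

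To establish that, I would first compute $\bar f$ for $f(x)=\frac{x}{1+ax+bx^2}$ by solving the quadratic $byx^2+(ay-1)x+y=0$ for $x=\bar f(y)$, taking the branch with $\bar f(y)\sim y$; this gives $\bar f(y)=\frac{1-ay-\sqrt{1-2ay+(a^2-4b)y^2}}{2by}$. Substituting $y=-f(x)=\frac{-x}{1+ax+bx^2}$ and clearing the common denominator $(1+ax+bx^2)^2$, one finds $1-ay=\frac{1+2ax+bx^2}{1+ax+bx^2}$ and that the radicand becomes $\frac{1+4ax+(4a^2-2b)x^2+4abx^3+b^2x^4}{(1+ax+bx^2)^2}$, whose numerator is precisely $(1+2ax+bx^2)^2-4bx^2$, i.e.\ the quantity $S^2$ appearing in the displayed formula for $F$. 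Everything then collapses to $\bar f(-f(x))=\frac{-(1+2ax+bx^2)+\sqrt{(1+2ax+bx^2)^2-4bx^2}}{2bx}$. Finally, applying the defining identity $c(z)=\frac{1-\sqrt{1-4z}}{2z}$ with $z=\frac{bx^2}{(1+2ax+bx^2)^2}$ shows this last expression equals $\frac{-x}{1+2ax+bx^2}c\!\left(\frac{bx^2}{(1+2ax+bx^2)^2}\right)=F(x)$, as claimed. (The degenerate case $b=0$, where the quadratic degenerates to a linear equation, gives $F(x)=\frac{-x}{1+2ax}$ and is checked directly, or read off as a formal limit.)

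Once $F(x)=\bar f(-f(x))$ is known, the involution property is automatic and is exactly the computation in the proof of the factorization theorem: $F(F(x))=\bar f\big({-}f\big(\bar f({-}f(x))\big)\big)=\bar f\big({-}({-}f(x))\big)=\bar f(f(x))=x$. Alternatively, one may simply invoke the factorization theorem with $g(x)=1$, which states that $(1,f)\cdot(1,\bar f(-x))=(1,\bar f(-f(x)))=(1,F(x))$ is a Riordan involution. Either route gives $(1,F)^2=I$. I expect no genuine obstacle here: the only non-routine step is the algebraic identification of the displayed series with the reversion-based object $\bar f(-f(x))$ for $f(x)=x/(1+ax+bx^2)$, and once the radicand is recognized as $(1+2ax+bx^2)^2-4bx^2$ this reduces to a short manipulation of the Catalan generating function $c$.
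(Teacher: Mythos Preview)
Your proposal is correct and follows essentially the same route as the paper. The paper does not give a separate proof of this corollary: it derives, immediately before the corollary, that the second component of the factorization-theorem involution built from $f(x)=x/(1+ax+bx^2)$ is exactly $F(x)=\bar f(-f(x))=\frac{-x}{1+2ax+bx^2}\,c\!\left(\frac{bx^2}{(1+2ax+bx^2)^2}\right)$, and the corollary is then immediate since $F\circ F=\mathrm{id}$. Your write-up simply spells out the algebraic identification $\bar f(-f(x))=F(x)$ in more detail (correctly recognising the radicand as $(1+2ax+bx^2)^2-4bx^2$), and then either reproduces or invokes the factorization theorem with $g=1$, which is precisely the paper's implicit argument.
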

Similar results to this corollary may be found in \cite{Elliptic}.
\begin{example} We take the example of
$$(g, f)=\left(\frac{1+x+x^2}{1+2x+x^2}, \frac{x}{1+2x+x^2}\right).$$
We have
$$(g,f)^{-1}=\left(\frac{1}{1-x}, c(x)-1\right).$$
We form the product
$$\left(\frac{1+x+x^2}{1+2x+x^2}, \frac{x}{1+2x+x^2}\right)\cdot \left(\frac{1}{1+x}, c(-x)-1\right).$$
In terms of matrices, this begins
$$\left(
\begin{array}{cccccc}
 1 & 0 & 0 & 0 & 0 & 0 \\
 -1 & 1 & 0 & 0 & 0 & 0 \\
 2 & -3 & 1 & 0 & 0 & 0 \\
 -3 & 7 & -5 & 1 & 0 & 0 \\
 4 & -14 & 16 & -7 & 1 & 0 \\
 -5 & 25 & -41 & 29 & -9 & 1 \\
\end{array}
\right)\cdot \left(
\begin{array}{cccccc}
 1 & 0 & 0 & 0 & 0 & 0 \\
 -1 & -1 & 0 & 0 & 0 & 0 \\
 1 & 3 & 1 & 0 & 0 & 0 \\
 -1 & -8 & -5 & -1 & 0 & 0 \\
 1 & 22 & 19 & 7 & 1 & 0 \\
 -1 & -64 & -67 & -34 & -9 & -1 \\
\end{array}
\right),$$ with a product that begins
$$\left(
\begin{array}{cccccc}
 1 & 0 & 0 & 0 & 0 & 0 \\
 -2 & -1 & 0 & 0 & 0 & 0 \\
 6 & 6 & 1 & 0 & 0 & 0 \\
 -16 & -30 & -10 & -1 & 0 & 0 \\
 42 & 140 & 70 & 14 & 1 & 0 \\
 -110 & -642 & -424 & -126 & -18 & -1 \\
\end{array}
\right).$$
This is the Riordan involution
$$\left(\frac{1+x+x^2}{1+3x+x^2}, c\left(\frac{-x}{(1+x)^2}\right)-1\right).$$
We can write this as
$$\left(\frac{1+x+x^2}{1+3x+x^2}, \frac{-x}{1+4x+x^2}c\left(\frac{x^2}{(1+4x+x^2)^2}\right)\right).$$
In particular, we see that
$$\left(1, \frac{-x}{1+4x+x^2}c\left(\frac{x^2}{(1+4x+x^2)^2}\right)\right)$$ is a Riordan involution.
This matrix begins
$$\left(
\begin{array}{ccccccc}
 1 & 0 & 0 & 0 & 0 & 0 & 0 \\
 0 & -1 & 0 & 0 & 0 & 0 & 0 \\
 0 & 4 & 1 & 0 & 0 & 0 & 0 \\
 0 & -16 & -8 & -1 & 0 & 0 & 0 \\
 0 & 68 & 48 & 12 & 1 & 0 & 0 \\
 0 & -304 & -264 & -96 & -16 & -1 & 0 \\
 0 & 1412 & 1408 & 652 & 160 & 20 & 1 \\
\end{array}
\right).$$
The generating function $\frac{-x}{1+4x+x^2}c\left(\frac{x^2}{(1+4x+x^2)^2}\right)$ expands to give the sequence that begins
$$0,-1, 4, -16, 68, -304, 1412,\ldots.$$
The sequence $1, 4, 16, 68, 304, 1412,\ldots$ is \seqnum{A006319}$(n+1)$, which counts the number of peaks at level $1$ in all Schr\"oder paths of semi-length $n$. The row sums of the Riordan involution $\left(1, \frac{-x}{1+4x+x^2}c\left(\frac{x^2}{(1+4x+x^2)^2}\right)\right)$ begins
$$1, -1, 5, -25, 129, -681, 3653, -19825, 108545, -598417, 3317445, \ldots.$$ The Hankel transform of this sequence is given by
$$h_n = 2^{\binom{n+1}{2}} [x^n]\frac{1}{1-2x+2x^2}=2^{\binom{n+1}{2}}n![x^n] e^x(\cos(x)+\sin(x)).$$
\end{example}
Specialising to the case of the orthogonal polynomials defined by  $$(g(x), f(x))=\left(\frac{1}{1+ax + b x^2}, \frac{x}{1+a x + b x^2}\right),$$ we have the following corollary.
\begin{corollary} The orthogonal polynomial coefficient array 
$$(g(x), f(x))=\left(\frac{1}{1+ax + b x^2}, \frac{x}{1+a x + b x^2}\right)$$ defines the Riordan involution given by 
$$\left(\frac{1}{1+2ax + bx^2}c\left(\frac{bx^2}{(1+2ax+bx^2)^2}\right), \frac{-x}{1+2ax + bx^2}c\left(\frac{bx^2}{(1+2ax+bx^2)^2}\right)\right).$$ 
\end{corollary}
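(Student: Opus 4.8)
The plan is to deduce this corollary from the preceding factorization theorem, specialised to $g(x)=\frac{1}{1+ax+bx^2}$ and $f(x)=\frac{x}{1+ax+bx^2}$, and then to identify the two components of the resulting involution by a direct computation rather than quoting the (more cumbersome) general formulas for $G$ and $F$. By the factorization theorem the array
$$(g,f)\cdot\left(\frac{1}{g(\bar{f}(-x))},\bar{f}(-x)\right)=\left(\frac{g(x)}{g(\bar{f}(-f(x)))},\,\bar{f}(-f(x))\right)$$
is automatically a Riordan involution, so the whole task is to match these two entries with the claimed closed forms.

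First I would compute $\bar{f}$. Solving $\frac{u}{1+au+bu^2}=x$ for the branch with $u(0)=0$ is a quadratic and gives $\bar{f}(x)=\frac{1-ax-\sqrt{1-2ax+(a^2-4b)x^2}}{2bx}$. Setting $F(x):=\bar{f}(-f(x))$ and substituting $-f(x)=\frac{-x}{1+ax+bx^2}$, the radicand becomes $\frac{(1+ax+bx^2)^2+2ax(1+ax+bx^2)+(a^2-4b)x^2}{(1+ax+bx^2)^2}$, and the one essentially nonroutine step is the identity
$$(1+ax+bx^2)^2+2ax(1+ax+bx^2)+(a^2-4b)x^2=(1+2ax+bx^2)^2-4bx^2,$$
which is just completing the square in $1+ax+bx^2$. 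Writing $P:=1+2ax+bx^2$ and $R:=\sqrt{P^2-4bx^2}$ (both equal to $1$ at $x=0$, which fixes the branch), clearing the nested denominators then yields $F(x)=\frac{R-P}{2bx}$. Since $\sqrt{1-\tfrac{4bx^2}{P^2}}=\frac{R}{P}$, one has $c\left(\frac{bx^2}{P^2}\right)=\frac{(P-R)P}{2bx^2}$, hence $\frac{-x}{P}\,c\left(\frac{bx^2}{P^2}\right)=\frac{R-P}{2bx}=F(x)$, which is exactly the claimed second component.

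For the first component, $\frac{g(x)}{g(\bar{f}(-f(x)))}=\frac{1+aF(x)+bF(x)^2}{1+ax+bx^2}$. Using $R^2=P^2-4bx^2$ to reduce $F^2$, a short manipulation collapses the numerator to $1+aF+bF^2=\frac{(ax-P)(R-P)}{2bx^2}$; the cancellation that makes this happen is the observation $ax-P=-(1+ax+bx^2)$, which simultaneously cancels the factor $1+ax+bx^2$ in the denominator and leaves $G(x)=\frac{P-R}{2bx^2}=\frac{1}{P}\,c\left(\frac{bx^2}{P^2}\right)$, as claimed. I expect this last factorization of $1+aF+bF^2$ to be the main obstacle: it is the one point where the specific shape $1+ax+bx^2$ of the denominator (rather than a generic monic quadratic) is genuinely used, and it is precisely what guarantees that $1+ax+bx^2$ divides the numerator, so that $G$ has the stated closed form; everything else is bookkeeping with the radical. (As an alternative one could instead substitute $c=d=0$ into the explicit formulas for $G$ and $F$ displayed after the factorization theorem and simplify, but the route above is shorter and self-contained.)
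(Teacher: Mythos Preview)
Your proposal is correct. In the paper the corollary is presented without a detailed proof, simply as the specialisation $c=d=0$ of the explicit formulas for $G$ and $F$ derived just after the factorization theorem; your route is the same in spirit (apply the factorization theorem to this particular $(g,f)$) but carries out the identification of the two components by a direct computation with $P=1+2ax+bx^2$ and $R=\sqrt{P^2-4bx^2}$ rather than by substituting into those general formulas. The key algebraic steps you isolate---the completion of the square $(1+ax+bx^2+ax)^2-4bx^2=P^2-4bx^2$ for the radicand, and the cancellation $ax-P=-(1+ax+bx^2)$ that removes the factor $1+ax+bx^2$ from $G$---are exactly what one would have to verify when simplifying the $c=d=0$ case of the paper's displayed $G$ and $F$, so the two arguments are effectively the same computation organised differently, with yours being more self-contained.
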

\begin{example} \textbf{The RNA involution}. We start with the Riordan array 
$$(g(x), f(x))=\left(\frac{1}{1-\frac{x}{2}+x^2}, \frac{x}{1-\frac{x}{2}+x^2}\right).$$ This matrix begins 
$$\left(
\begin{array}{cccccc}
 1 & 0 & 0 & 0 & 0 & 0 \\
 \frac{1}{2} & 1 & 0 & 0 & 0 & 0 \\
 -\frac{3}{4} & 1 & 1 & 0 & 0 & 0 \\
 -\frac{7}{8} & -\frac{5}{4} & \frac{3}{2} & 1 & 0 & 0 \\
 \frac{5}{16} & -\frac{5}{2} & -\frac{3}{2} & 2 & 1 & 0 \\
 \frac{33}{32} & \frac{5}{16} & -\frac{19}{4} & -\frac{3}{2} & \frac{5}{2} & 1 \\
\end{array}
\right).$$
The matrix $\left(\frac{1}{g(\bar{f}(-x))}, \bar{f}(-x)\right)$ then begins
$$\left(
\begin{array}{cccccc}
 1 & 0 & 0 & 0 & 0 & 0 \\
 \frac{1}{2} & -1 & 0 & 0 & 0 & 0 \\
 \frac{5}{4} & -1 & 1 & 0 & 0 & 0 \\
 \frac{13}{8} & -\frac{11}{4} & \frac{3}{2} & -1 & 0 & 0 \\
 \frac{57}{16} & -\frac{9}{2} & \frac{9}{2} & -2 & 1 & 0 \\
 \frac{201}{32} & -\frac{165}{16} & \frac{35}{4} & -\frac{13}{2} & \frac{5}{2} & -1 \\
\end{array}
\right).$$ 
Taking the product of these two matrices, we obtain the matrix that begins 
$$\left(
\begin{array}{ccccccc}
 1 & 0 & 0 & 0 & 0 & 0 & 0 \\
 1 & -1 & 0 & 0 & 0 & 0 & 0 \\
 1 & -2 & 1 & 0 & 0 & 0 & 0 \\
 2 & -3 & 3 & -1 & 0 & 0 & 0 \\
 4 & -6 & 6 & -4 & 1 & 0 & 0 \\
 8 & -13 & 13 & -10 & 5 & -1 & 0 \\
 17 & -28 & 30 & -24 & 15 & -6 & 1 \\
\end{array}
\right),$$ which is the RNA involution \cite{Cameron}, defined by 
$$\left(\frac{1}{1-x+x^2}c\left(\frac{x^2}{(1-x+x^2)^2}\right), \frac{-x}{1-x+x^2}c\left(\frac{x^2}{(1-x+x^2)^2}\right)\right).$$ 
\end{example}

\section{Further results}
The matrix $M$ with  $M^{-1}=\left(\frac{1}{(1+x)^2}, \frac{x}{(1+x)^2}\right)$ is the coefficient array of a family of orthogonal polynomials. This is evidenced by the fact that the production matrix of $M$ is given by the matrix that begins
$$\left(
\begin{array}{ccccccc}
 2 & 1 & 0 & 0 & 0 & 0 & 0 \\
 1 & 2 & 1 & 0 & 0 & 0 & 0 \\
 0 & 1 & 2 & 1 & 0 & 0 & 0 \\
 0 & 0 & 1 & 2 & 1 & 0 & 0 \\
 0 & 0 & 0 & 1 & 2 & 1 & 0 \\
 0 & 0 & 0 & 0 & 1 & 2 & 1 \\
 0 & 0 & 0 & 0 & 0 & 1 & 2 \\
\end{array}
\right).$$
Indeed, it is the three-diagonal form of this matrix that tells us that we are working with orthogonal polynomials. In like manner, the matrix
$$M=\left(\frac{1}{(1+x)^3}, \frac{x}{(1+x)^3}\right)^{-1}$$ has a production matrix that begins
$$\left(
\begin{array}{ccccccc}
 3 & 1 & 0 & 0 & 0 & 0 & 0 \\
 3 & 3 & 1 & 0 & 0 & 0 & 0 \\
 1 & 3 & 3 & 1 & 0 & 0 & 0 \\
 0 & 1 & 3 & 3 & 1 & 0 & 0 \\
 0 & 0 & 1 & 3 & 3 & 1 & 0 \\
 0 & 0 & 0 & 1 & 3 & 3 & 1 \\
 0 & 0 & 0 & 0 & 1 & 3 & 3 \\
\end{array}
\right).$$
The four-diagonal form of this matrix tells us the $M^{-1}$ is the coefficient array of a family of $2$-orthogonal polynomials, where a $1$-orthogonal family of polynomials is a usual orthogonal family of polynomials.

We begin our discussion of the general case by looking at the case of the ternary numbers $T_n=\frac{1}{2n+1}\binom{3n}{n}$ \seqnum{A001764}. We denote by $t(x)$ the generating function of the ternary numbers, which satisfies $t(x)= 1+ x t(x)^3$.  We have
$$t(x)=\frac{2}{\sqrt{3x}}\sin\left(\frac{1}{3}\sin^{-1}\left(\frac{\sqrt{27x}}{2}\right)\right)$$
and
$$t(x)=1+\text{Rev}\left(\frac{x}{(1+x)^3}\right).$$
\begin{example}
We consider the elements in the first column of the inverse Riordan array
$$\left(\frac{1+xy(1+x)^2}{(1+x)^3}, \frac{x}{(1+x)^3}\right)^{-1}.$$
These are polynomials in $y$ which begin
$$1, 3 - y, y^2 - 8y + 12, - y^3 + 13y^2 - 52y + 55, y^4 - 18y^3 + 117y^2 - 320y + 273,\ldots.$$
They have a coefficient array which begins
$$\left(
\begin{array}{cccccc}
 1 & 0 & 0 & 0 & 0 & 0 \\
 3 & -1 & 0 & 0 & 0 & 0 \\
 12 & -8 & 1 & 0 & 0 & 0 \\
 55 & -52 & 13 & -1 & 0 & 0 \\
 273 & -320 & 117 & -18 & 1 & 0 \\
 1428 & -1938 & 910 & -207 & 23 & -1 \\
\end{array}
\right).$$
This is the Riordan array $(t(x)^3, -xt(x)^5)$ which by \cite{Phulara} is a Riordan involution.
The Hankel transform of the (moment) polynomials is of combinatorial interest. This Hankel transform begins
$$1, 3 - 2y, 11y^2 - 34y + 26, - 170y^3 + 804y^2 - 1254y + 646, \ldots$$
which for $y=0,1$ gives, respectively, the sequences  beginning
$$ 1, 3, 26, 646, 45885, 9304650, 5382618660, 8878734657276,\ldots,$$ and
$$1, 1, 3, 26, 646, 45885, 9304650, 5382618660, 8878734657276,\ldots$$
The second sequence is \seqnum{A005156}, the number of alternating sign $(2n+1) \times (2n+1)$ matrices symmetric about the vertical axis. The values of the (moment) polynomials for $y=0,1$ are given by
$$1, 3, 12, 55, 273, 1428, 7752, 43263, 246675, 1430715, 8414640,\ldots$$ which is $T_{n+1}$ and
$$1, 2, 5, 15, 53, 215, 971, 4745, 24540, 132235, 734572,\ldots.$$
This latter sequence is the INVERT$(1)$ transform of \seqnum{A098746}, which gives the number of permutations of $[n]$ which avoid $4231$ and $42513$.

The coefficient array of the polynomial sequence given by the Hankel transform begins
$$\left(
\begin{array}{cccccc}
 1 & 0 & 0 & 0 & 0 & 0 \\
 3 & -2 & 0 & 0 & 0 & 0 \\
 26 & -34 & 11 & 0 & 0 & 0 \\
 646 & -1254 & 804 & -170 & 0 & 0 \\
 45885 & -117990 & 112860 & -47538 & 7429 & 0 \\
 9304650 & -29774880 & 37838910 & -23849850 & 7447515 & -920460 \\
\end{array}
\right).$$
We note that the right border in absolute value begins
$$1, 2, 11, 170, 7429, 920460, 323801820, 323674802088,\ldots,$$ which is \seqnum{A051255}$(n+1)$. The sequence
\seqnum{A051255} counts the number of cyclically symmetric transpose complement plane partitions in a $(2n)\times(2n)\times(2n)$ box. This is the Hankel transform of the ternary numbers $\frac{1}{2n+1}\binom{3n}{n}$.

In like fashion, the moments of the matrix
$$\left(\frac{1+xy(1+x)^2}{(1+x)^2}, \frac{x}{(1+x)^3}\right)$$ have coefficient matrix given by the Riordan involution $(t(x)^2, -xt(x)^5)$, the moments of the matrix
$$\left(\frac{1+xy(1+x)^2}{(1+x)}, \frac{x}{(1+x)^3}\right)$$ have coefficient matrix given by the Riordan involution $(t(x), -xt(x)^5)$, and the moments of the matrix
$$\left(1+xy(1+x)^2, \frac{x}{(1+x)^3}\right)$$ have their coefficient matrix given by the Riordan involution $(1, -xt(x)^5)$.
\end{example}
The methods of \cite{Phulara} now allow us to give the general result.
\begin{theorem} The coefficient matrix of the moments of the parameterized Riordan array
$$\left(\frac{1+xy(1+x)^{k-1}}{(1+x)^m}, \frac{x}{(1+x)^k}\right)$$
for $m=0\ldots k$ is the Riordan involution $$\left(g(x)^m, -x g(x)^{2k-1}\right),$$
where $g(x)=1+x g(x)^k$.
\end{theorem}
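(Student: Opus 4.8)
The plan is to reduce the whole statement to a single algebraic identity for $g(x)$ and then apply the ``power of an involution'' principle already used above. Write the parameterized array as $R=(G(x),F(x))$ with $G(x)=\frac{1+xy(1+x)^{k-1}}{(1+x)^m}$ and $F(x)=\frac{x}{(1+x)^k}$; since $G(0)=1$ and $F_1=1$, $R$ is a genuine invertible Riordan array. First I would compute $\bar F$. From $g=1+xg^k$ we have $xg^k=g-1$, hence $\frac{xg(x)^k}{(1+xg(x)^k)^k}=\frac{xg^k}{g^k}=x$, so $\bar F(x)=xg(x)^k=g(x)-1$ and $1+\bar F(x)=g(x)$. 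Feeding this into the inverse rule $R^{-1}=\bigl(\frac{1}{G(\bar F(x))},\bar F(x)\bigr)$ and using $\bar F(1+\bar F)^{k-1}=xg^k\cdot g^{k-1}=xg^{2k-1}$ together with $(1+\bar F)^m=g^m$ gives
$$R^{-1}=\left(\frac{g(x)^m}{1+xy\,g(x)^{2k-1}},\ xg(x)^k\right),$$
so the moments $\mu_n(y)$, being the entries of the first column of $R^{-1}$, have bivariate generating function $\frac{g(x)^m}{1+xy\,g(x)^{2k-1}}$.

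Next I would match this against a Riordan array. Expanding the geometric series,
$$\frac{g(x)^m}{1+xy\,g(x)^{2k-1}}=\sum_{j\ge0}(-1)^j\,x^j\,g(x)^{m+(2k-1)j}\,y^j=\sum_{j\ge0}g(x)^m\bigl(-xg(x)^{2k-1}\bigr)^j\,y^j,$$
so $\mu_n(y)$ is a polynomial in $y$ of degree $\le n$ (as $(-xg^{2k-1})^j=O(x^j)$), and the coefficient of $x^n y^j$ on the left is $[x^n]\,g(x)^m\bigl(-xg(x)^{2k-1}\bigr)^j$, which is exactly the $(n,j)$ entry of the Riordan array $\bigl(g(x)^m,-xg(x)^{2k-1}\bigr)$. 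Hence the coefficient matrix of the moments \emph{is} $\bigl(g^m,-xg^{2k-1}\bigr)$, and it only remains to prove that this array is a Riordan involution.

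For the involution part I would first do $m=1$ and then bootstrap: by the principle recorded after the Catalan proposition above, $(a(x),f(x))$ an involution implies $(a(x)^m,f(x))$ an involution for every $m\in\mathbb Z$, in particular for $m=0,\dots,k$. Set $f(x)=-xg(x)^{2k-1}$; by the inverse formula it suffices to prove the single identity $g(f(x))=1/g(x)$, for then $f(f(x))=-f(x)g(f(x))^{2k-1}=-(-xg^{2k-1})g^{-(2k-1)}=x$, so $\bar f=f$, and $\frac{1}{g(\bar f(x))}=\frac{1}{g(f(x))}=g(x)$, i.e.\ $(g,f)^{-1}=(g,f)$. The identity $g(f(x))=1/g(x)$ I would obtain by uniqueness: composing $g=1+tg^k$ with $t=f(x)$ shows $W:=g(f(x))$ is a power series with $W(0)=1$ satisfying $W=1+f(x)W^k$, and by coefficient comparison (using $f(x)=O(x)$, so $W_n$ depends only on $W_0,\dots,W_{n-1}$) there is exactly one such $W$; but $1/g(x)$ also works, since $1+f(x)(1/g(x))^k=1-xg(x)^{k-1}$, which equals $1/g(x)$ because $g\cdot(1-xg^{k-1})=g-xg^k=g-(g-1)=1$.

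The bookkeeping in the first two paragraphs is purely mechanical; the substantive point, and the expected main obstacle, is the functional-equation step $g(f(x))=1/g(x)$ that underlies the involution, which is precisely where ``the methods of \cite{Phulara}'' enter (one could instead cite \cite{Phulara} for the involution property directly rather than give the self-contained uniqueness argument). As sanity checks, $k=2$ gives $g=c(x)$ and recovers the involutions $(c,-xc^3)$ and $(c^2,-xc^3)$ of the earlier proposition and corollary, while $k=3$ gives $g=t(x)$ and recovers $(t^m,-xt^5)$, $m=0,\dots,3$, from the ternary example.
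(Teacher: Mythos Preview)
Your proposal is correct. The paper itself does not give a detailed proof of this theorem: it simply states that ``the methods of \cite{Phulara} now allow us to give the general result'' and leaves it at that. Your argument follows exactly the template of the paper's explicit proof for the $k=2$ case (the main theorem in Section~5): compute $\bar F$, compute the first column of the inverse array, recognize the resulting bivariate generating function $\frac{g^m}{1+xy\,g^{2k-1}}$ as the generating function of the Riordan array $(g^m,-xg^{2k-1})$, and then establish the involution property.

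The one place where you go beyond the paper is the last step. The paper, both here and in the earlier Proposition on $(c(x),-xc(x)^3)$, leans on \cite{Phulara} for the involution property of $(g,-xg^{2k-1})$. You instead give a direct, self-contained argument via the functional equation $g(f(x))=1/g(x)$, proved by checking that $1/g$ satisfies the same defining relation $W=1+f(x)W^k$ with $W(0)=1$ and invoking uniqueness. This is a cleaner route than the reversion-and-substitution computation the paper carries out in the Catalan case, and it makes transparent why the power-of-an-involution principle immediately propagates the result to all $m$. Your sanity checks ($k=2$ recovers $(c^m,-xc^3)$, $k=3$ recovers $(t^m,-xt^5)$) are exactly the specializations the paper treats explicitly.
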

The moments in question are then the moments of the corresponding  $(k-1)$-orthogonal polynomials \cite{DO}.
\section{Conclusions} We have seen that parameterized Riordan arrays of the form
$$\left(\frac{1+xy(1+x)^{k-1}}{(1+x)^m}, \frac{x}{(1+x)^k}\right)$$ are closely related to certain Riordan involutions. These involutions arise as the coefficient arrays (expansions in the parameter $y$) of the polynomial sequences (in $y$) that arise as the first columns of the inverse matrix. In the case of $k=2$, the polynomials concerned are generalized Chebyshev polynomials. In the cases studied above, the  polynomial sequences have interesting Hankel transforms.

Moreover, we have seen that for every Riordan array $(g(x), f(x))$, there is a Riordan involution given by the product 
$$ (g(x), f(x))\cdot \left(\frac{1}{g(\bar{f}(-x))}, \bar{f}(-x)\right).$$

\section{Appendix - a look at the case of the Chebyshev polynomials of the first kind}
For the sake of completeness, we present a small discussion of the Chebyshev polynomials of the first kind $T_n(x)$ and related moments. While the Chebyshev polynomials of the second kind have a Riordan array as their coefficient array - namely, the Riordan array $\left(\frac{1}{1+x^2}, \frac{2x}{1+x^2}\right)$, the coefficient array for the Chebyshev polynomials of the first kind is not given by a Riordan array, but by an almost-Riordan array of the first kind \cite{Almost}. This means that the coefficient array is given by a Riordan array, with a column prepended, which in this case means that the overall matrix is not a Riordan array.

We have
$$\sum_{n=0}^{\infty}T_n(x)t^n = \frac{1- x t}{1-2x t+t^2}.$$
The sequence $T_n(x)$ begins
$$1, x, 2x^2 - 1, 4x^3 - 3x, 8x^4 - 8x^2 + 1, 16x^5 - 20x^3 + 5x,\ldots,$$ with a coefficient array
that begins
$$\left(
\begin{array}{ccccccc}
 1 & 0 & 0 & 0 & 0 & 0 & 0 \\
 0 & 1 & 0 & 0 & 0 & 0 & 0 \\
 -1 & 0 & 2 & 0 & 0 & 0 & 0 \\
 0 & -3 & 0 & 4 & 0 & 0 & 0 \\
 1 & 0 & -8 & 0 & 8 & 0 & 0 \\
 0 & 5 & 0 & -20 & 0 & 16 & 0 \\
 -1 & 0 & 18 & 0 & -48 & 0 & 32 \\
\end{array}
\right).$$
The first column has generating function $\frac{1}{1+x^2}$, while the embedded Riordan array that begins
$$\left(
\begin{array}{cccccc}
 1 & 0 & 0 & 0 & 0 & 0 \\
 0 & 2 & 0 & 0 & 0 & 0 \\
 -3 & 0 & 4 & 0 & 0 & 0 \\
 0 & -8 & 0 & 8 & 0 & 0 \\
 5 & 0 & -20 & 0 & 16 & 0 \\
 0 & 18 & 0 & -48 & 0 & 32 \\
\end{array}
\right)$$ is the Riordan array
$$\left(\frac{1-x^2}{(1+x^2)^2}, \frac{2x}{1+x^2}\right).$$
As an almost Riordan array of the first order, we write the coefficient array as
$$\left(\frac{1}{1+x^2} | \frac{1-x^2}{(1+x^2)^2}, \frac{2x}{1+x^2}\right).$$
As before, we shall use the term ``moments'' for the first column elements of the inverses of the matrices that we shall present.

We consider the parameterized almost Riordan array, based on the generalized Chebyshev polynomials of the first kind, given by
$$\left(\frac{1+yx+yx^2}{1+x^2} | \frac{1+yx+yx^2}{(1+x^2)^2}, \frac{x}{1+x^2}\right).$$
This almost Riordan array begins
$$\left(
\begin{array}{ccccccc}
 1 & 0 & 0 & 0 & 0 & 0 & 0 \\
 y & 1 & 0 & 0 & 0 & 0 & 0 \\
 y-1 & y & 1 & 0 & 0 & 0 & 0 \\
 -y & y-2 & y & 1 & 0 & 0 & 0 \\
 1-y & -2 y & y-3 & y & 1 & 0 & 0 \\
 y & 3-2 y & -3 y & y-4 & y & 1 & 0 \\
 y-1 & 3 y & 3\cdot (2-y) & -4 y & y-5 & y & 1 \\
\end{array}
\right),$$ with an inverse that begins
$$\left(
\begin{array}{ccccc}
 1 & 0 & 0 & 0 & 0 \\
 -y & 1 & 0 & 0 & 0 \\
 y^2-y+1 & -y & 1 & 0 & 0 \\
 -y \left(y^2-2 y+2\right) & y^2-y+2 & -y & 1 & 0 \\
 y^4-3 y^3+4 y^2-3 y+2 & -y \left(y^2-2 y+3\right) & y^2-y+3 & -y & 1 \\
\end{array}
\right).$$
The moment sequence we seek then begins
$$1, -y, y^2 - y + 1, - y(y^2 - 2y + 2), y^4 - 3y^3 + 4y^2 - 3y + 2, - y(y^4 - 4y^3 + 7y^2 - 8y + 5),\ldots.$$
This moment sequence thus has a coefficient array that begins
$$\left(
\begin{array}{cccccccc}
 1 & 0 & 0 & 0 & 0 & 0 & 0 & 0 \\
 0 & -1 & 0 & 0 & 0 & 0 & 0 & 0 \\
 1 & -1 & 1 & 0 & 0 & 0 & 0 & 0 \\
 0 & -2 & 2 & -1 & 0 & 0 & 0 & 0 \\
 2 & -3 & 4 & -3 & 1 & 0 & 0 & 0 \\
 0 & -5 & 8 & -7 & 4 & -1 & 0 & 0 \\
 5 & -9 & 14 & -16 & 11 & -5 & 1 & 0 \\
 0 & -14 & 28 & -32 & 28 & -16 & 6 & -1 \\
\end{array}
\right).$$
This is the Riordan array
$$\left(c(x^2), \frac{(1+x)\sqrt{1-4x^2}+2x^2-x-1}{2x^2}\right)=\left(c(x^2), (1+x)c(x^2)-1\right).$$
In this case, we do not get an involution. Indeed, the inverse of this matrix is given by
$$\left(\frac{\sqrt{1-4x}-2x+3}{2(x^2-2x+2)}, -\frac{\sqrt{1-4x}(x-1)+x+1}{2(x^2-2x+2)}\right).$$
The generating function of the  moment sequence may be expressed as the continued fraction
$$\cfrac{1}{1+yx-
\cfrac{(1-y)x^2}{1-
\cfrac{x^2}{1-
\cfrac{x^2}{1-\cdots}}}},$$ which is equal to
$$\frac{1}{1+yx-(1-y)x^2 c(x^2)}.$$
We deduce from the continued fraction that the Hankel transform of the moment sequence is given by
$$h_n = (1-y)^n.$$
We note that the sequence with generating function $\frac{\sqrt{1-4x}-2x+3}{2(x^2-2x+2)}$ has an interesting property. This sequence begins
$$1, 0, -1, -2, -4, -10, -29, -90, -290, -960, -3246,\ldots.$$ This sequence, and the sequence \seqnum{A182486}, which begins
$$1,1, 0, -1, -2, -4, -10, -29, -90, -290, -960, -3246,\ldots$$ both have the Hankel transform $(-1)^n$, and this pair is the only pair of sequence and once-shifted sequence that has this property (Michael Somos).

The generating function $\frac{\sqrt{1-4x}(x-1)+x+1}{2(x^2-2x+2)}=\frac{x}{1-(1-x)xc(x)}$ expands to give the sequence \seqnum{A035929}.

\bigskip
\hrule

\noindent 2010 {\it Mathematics Subject Classification}: Primary
15B36; Secondary 33C45, 11B83, 11C20, 05A15.
\noindent \emph{Keywords:} Riordan array, involution, orthogonal polynomial, Chebyshev polynomials, moment sequence, Hankel transform.

\bigskip
\hrule
\bigskip
\noindent (Concerned with sequences
\seqnum{A000045},
\seqnum{A000108},
\seqnum{A001764},
\seqnum{A005156},
\seqnum{A007318},
\seqnum{A033184},
\seqnum{A035929},
\seqnum{A039598},
\seqnum{A051255},
\seqnum{A081696},
\seqnum{A098746},
\seqnum{A106566},
\seqnum{A107842},
\seqnum{A109262},
\seqnum{A109267},
\seqnum{A128899},
\seqnum{A182486}, and
\seqnum{A225887}.)

\end{document}